 \def\bl#1{\textcolor{blue}{#1}}
 \numberwithin{equation}{section}
 \theoremstyle{plain}
 \newtheorem{theorem}{Theorem}[section]
 \newtheorem{lemma}[theorem]{Lemma}
 \newtheorem{observation}[theorem]{Observation}
 \newtheorem*{remark}{Remark}
 \newtheorem{corollary}[theorem]{Corollary}
 \newtheorem{proposition}[theorem]{Proposition}
   \theoremstyle{definition}
\newtheorem{definition} [theorem] {Definition}
  \newtheorem{example}[theorem]{Example}
 \def\bs{\boldsymbol}
 \def\T{ \mathbb T}
 \def\R{ \mathbb R}
 \def\H{H^\infty}
 \def\D{{ \mathbb D}}
  \def\K{{ \mathbb K}}
 \def\C{{ \mathbb C}}
 \def\N{{ \mathbb N}}
  \newcommand{\zit}[1]{(\ref{#1})}
 \def\bsr{\operatorname{bsr}}
 \def\e{\varepsilon}
 \def\dbar{\ov\partial}
 \def\bs{\boldsymbol}
 \def\dis{\displaystyle}
 \def\union{\cup}
 \def\inter{\cap}
 \def\Inter{\bigcap }
 \def\ov{\overline}
 \def\ss{\subseteq}
 \def\emp{\emptyset}
 \def\buildrel#1_#2^#3{\mathrel{\mathop{\kern 0pt#1}\limits_{#2}^{#3}}}
 \def\IBP{interpolating Blaschke product}
 \def\ssi{\Longleftrightarrow}
 \def\imp{\Longrightarrow}
\begin{document}

\title[Reducibility to the principal component]{%
Reducibility of invertible tuples to the principal component in 
  commutative Banach algebras%
\keywords{ 2010 Mathematics Subject Classification: Primary 46J05, Secondary 46J10 and 54C20.
Keywords: Commutative Banach algebras; exponential matrices; principal component;
 exponential reducibility;  Bass stable rank.}}

                % Do not remove 

\author{% 
 Raymond Mortini\and
  Rudolf Rupp
}

\address{%
  Raymond Mortini\\
  D\'{e}partement de Math\'{e}matiques et  
Institut \'Elie Cartan de Lorraine,  UMR 7502\\
  Universit\'{e} de Lorraine, Metz\\
France\\
raymond.mortini@univ-lorraine.fr
\and
  Rudolf Rupp\\
  Fakult\"at f\"ur Angewandte Mathematik, Physik  und Allgemeinwissenschaften\\
  TH-N\"urnberg\\
  Germany\\
  Rudolf.Rupp@th-nuernberg.de
} 

\maketitle

\begin{abstract}
Let $A$ be  a complex, commutative unital Banach algebra.
We introduce two notions of exponential reducibility of Banach algebra tuples and 
present an analogue to the Corach-Su\'arez result on the connection between
reducibility in $A$ and in $C(M(A))$. Our methods are of an analytical nature.
Necessary and sufficient geometric/topological conditions are given for reducibility
(respectively reducibility to the principal component of $U_n(A)$) 
whenever the spectrum of $A$ is homeomorphic to a subset of $\C^n$.
\end{abstract}

%\centerline {\small\the\day.\the \month.\the\year} \medskip

  \section{Introduction}
  %%%%
  
  The concepts of stable ranges  and  reducibility of invertible tuples in rings originate from  Hyman Bass's work \cite{ba} treating problems in algebraic $K$-theory.   Later on, due to work of L. Vasershtein \cite{va}, these notions also turned out to be very important in the theory of function algebras and topology
  because of their  intimate relations to extension problems.  This direction has further been developed by 
  Corach and Su\'arez, \cite{cs1}, \cite{cs}.  Function theorists have also been  interested in this subject
  and mainly computed the stable ranks for various algebras of holomorphic functions. For example,   P.W.  Jones, D. Marshall and T. Wolff \cite{jmw} determined  the stable rank of the disk algebra $A(\D)$,
 and Corach and Su\'arez \cite{cs3} the one  for the polydisk and ball algebras. The whole 
culminated in S. Treil's work on the stable rank for the algebra $\H$ of bounded
  analytic functions on the unit disk \cite{tr}. Recent work  includes investigations of stable ranks
 for  real-symmetric function algebras (see for instance \cite{mowi} and \cite{mr4}). 
The subject of the present paper is linked to the theory developped by Corach
 and Su\'arez and provides a detailed analysis of the fine structure of the set $U_n(A)$
 of invertible tuples within the realm of  commutative Banach algebras.
The main intention   is the introduction of a new
 concept, the exponential reducibility of $n$-tuples, and to present a new view
 on  the structure of the connected components of $U_n(A)$.
 
  In contrast to the work of Corach and Su\'arez (and Lin), we use an analytic framework (and methods)
instead of the powerful algebraic-topological setting. We think that this makes the
theory  accessible to a larger readership.\\ 

\subsection{Notational background and scheme of the paper}
Let  $A$ be a commutative unital Banach algebra over $\K=\R$ or $\K=\C$,
 the identity element (or multiplicatively neutral element) being denoted by $\bs 1$.
 Then  the spectrum (=set of nonzero, multiplicative $\K$-linear functionals on $A$) of $A$ is 
 denoted by  $M(A)$,
  and the set of all $n\times n$-matrices over $A$ by $\mathcal M_n(A)$. 
If $f\in C(X,\K)$, the space  of all $\K$-valued continuous functions on the topological space $X$, 
then $Z(f):=\{x\in X: f(x)=0\}$.
If $\bs f=(f_1,\dots, f_n)\in C(X,\K^n)$, then $Z({\bs f}):=\Inter_{j=1}^n Z(f_j)$ is the joint zero-set.
Moreover,  if $\bs f\in A^n$, then $|\bs f|=\sqrt{\sum_{j=1}^n |f_j|^2}$, 
$\langle\bs f,\bs g\rangle:=\bs f\cdot\bs g:=\sum_{j=1}^nf_jg_j$
and, when viewed as an element in $A^n$,   $\bs e_1:=(\bs 1,0,\dots,0)$. Finally, for $f\in C(X,\K)$,
$$||f||_\infty=||f||_X=\sup\{|f(x)|: x\in X\}.$$

%%%%

Let us begin with the pertinent definitions.

  %%%
  \begin{definition}

 $\bullet$ ~~ An $n$-tuple $(f_1,\dots,f_n)\in A^n$ is said to be {\it invertible} (or {\it unimodular}), 
  if there exists
 $(x_1,\dots,x_n)\in A^n$ such that the B\'ezout equation $\sum_{j=1}^n x_jf_j= {\bs 1} $
 is satisfied.
   The set of all invertible $n$-tuples is denoted by $U_n(A)$. Note that $U_1(A)=A^{-1}$.

 $\bullet$ ~~ An $(n+1)$-tuple $(f_1,\dots,f_n,g)\in U_{n+1}(A)$ is  called {\sl reducible}  (in $A$)
 if there exists 
 $(a_1,\dots,a_n)\in A^n$ such that $(f_1+a_1g,\dots, f_n+a_ng)\in U_n(A)$.
 
  $\bullet$ ~~  The {\sl Bass stable rank} of $A$, denoted by $\bsr A$,  is the smallest integer $n$ such that every element in $U_{n+1}(A)$ is reducible. 
 If no such $n$ exists, then $\bsr A=\infty$. 

\end{definition}
  %These fundamental notions  were the subject of a great variety of 
  %publications in algebra, K-theory and analysis, beginning with the work of H. Bass \cite{ba}.
    
%%%%

The following two results due to   Corach and Su\'arez are the key to the theory of stable ranks.

\begin{lemma}\label{clopenreduc} (\cite[p. 636]{cs1} and \cite[p. 608]{cs2}).
 Let $A$ be  a commutative, unital Banach algebra over $\K$.%
 % with unit element denoted by ${\bs 1}$.
 Then, for $g\in A$, the set 
$$
\mbox{$R_n(g):=\{\bs f\in A^n: (\bs f, g)$ is reducible$\}$}
$$
 is open-closed inside the open set 
 $$I_n(g):=\{\bs f\in A^n: (\bs f, g)\in U_{n+1}(A)\}.$$
In particular,
if $\phi: [0,1]\to I_n(g)$ is a continuous map and $(\phi(0),g)$ is reducible, then $(\phi(1),g)$
is reducible.  Moreover, $R_n(g)=gA^n+U_n(A)$. 
\end{lemma}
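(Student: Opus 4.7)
The plan is to split the assertion into four pieces and address them in order: (a) the algebraic identity $R_n(g)=gA^n+U_n(A)$; (b) the openness of $R_n(g)$ in $A^n$; (c) the closedness of $R_n(g)$ in $I_n(g)$; (d) the path-lifting statement. Parts (a) and (b) will be soft; part (c) will be the hard analytic step; (d) will follow formally.

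For (a), I would unwind the definition: $\bs f\in R_n(g)$ is equivalent to the existence of $\bs a\in A^n$ with $\bs f+g\bs a\in U_n(A)$, i.e., to $\bs f\in U_n(A)-g\bs a$; taking the union over $\bs a$ gives $\bs f\in U_n(A)+gA^n$, and the reverse inclusion is immediate. For (b), I would first note that $U_n(A)$ is open in $A^n$ because a Bezout identity $\langle\bs b,\bs f\rangle=\bs 1$ persists under small perturbations of $\bs f$ (the inner product stays invertible in $A$ and can be renormalised). Thus $R_n(g)=\bigcup_{\bs a\in A^n}(U_n(A)+g\bs a)$ is a union of translates of an open set, hence open in $A^n$ and a fortiori in $I_n(g)$.

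The main obstacle is (c). Starting from a sequence $\bs f_k\to\bs f\in I_n(g)$ with $\bs f_k\in R_n(g)$, I would fix a Bezout pair $(\bs x,y)\in A^n\times A$ with $\langle\bs x,\bs f\rangle+yg=\bs 1$ and write $\bs f_k=\bs u_k+g\bs a_k$ with $\bs u_k\in U_n(A)$. The scalar $s_k:=\langle\bs x,\bs f_k\rangle+yg$ tends to $\bs 1$ and is therefore invertible for large $k$. The plan is to produce the required reducer for $\bs f$ in the form $\bs a'_k:=\bs a_k+\bs c_k$, where the correction $\bs c_k$ will be built explicitly out of $\bs x$, $y$, and $s_k^{-1}$, so as to absorb the perturbation $\bs f-\bs f_k$ in a direction controlled by the fixed Bezout pair for $(\bs f,g)$ rather than by a Bezout of $\bs u_k$. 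The hard part is precisely this last point: a Bezout for $\bs u_k$ need not remain bounded as $k\to\infty$, so a naive continuity argument applied to $\bs u_k+(\bs f-\bs f_k)$ cannot work, and one has to rewrite the expression using the Bezout for $(\bs f,g)$ before invoking the openness of $U_n(A)$ established in (b).

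Once (b) and (c) are in place, part (d) is immediate: for a continuous $\phi\colon[0,1]\to I_n(g)$, the preimage $\phi^{-1}(R_n(g))$ is clopen in $[0,1]$, contains $0$ by hypothesis, and by connectedness equals all of $[0,1]$; in particular $\phi(1)\in R_n(g)$.
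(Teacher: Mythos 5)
Parts (a), (b) and (d) of your proposal are sound: the identity $R_n(g)=gA^n+U_n(A)$ is immediate from the definitions, writing $R_n(g)=\bigcup_{\bs a\in A^n}\bigl(g\,\bs a+U_n(A)\bigr)$ gives openness in $A^n$ and hence in $I_n(g)$ (after the easy observation that $R_n(g)\ss I_n(g)$), and (d) is the standard connectedness argument once (b) and (c) are in place. Note also that the paper itself does not prove this lemma --- it is cited to Corach--Su\'arez --- so the only in-paper comparison is with the parallel argument for Theorem~\ref{clopenredu-exp}.

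The genuine gap is in (c), and you flag it yourself. You correctly identify the obstruction (a Bezout row for $\bs u_k:=\bs f_k+g\,\bs a_k$ need not stay bounded) and correctly propose to exploit a fixed Bezout $\langle\bs x,\bs f\rangle+yg=\bs 1$ for the limit. But the proposal stops at a ``plan,'' and the announced form of the correction cannot be right: a $\bs c_k$ built only out of $\bs x$, $y$ and $s_k^{-1}$ and simply \emph{added} to $\bs a_k$ does not suffice; the correction must also see $\bs a_k$ and $\bs h_k:=\bs f-\bs f_k$, and it is really a \emph{matrix} (shear) correction. Concretely, set $M_k:=I_n+\bs x^t\,s_k^{-1}\,\bs h_k\in\mathcal M_n(A)$. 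Since $\langle\bs x,\bs h_k\rangle=\bs 1-s_k$, Sylvester's determinant identity gives $\det M_k=\bs 1+s_k^{-1}\langle\bs x,\bs h_k\rangle=s_k^{-1}$, which is invertible for $k$ large, so $M_k\in\mathcal M_n(A)^{-1}$. Using $\langle\bs x,\bs f_k\rangle=s_k-yg$, one computes
$$
\bs u_k\,M_k \;=\; \bs f \;+\; g\,\Bigl(\bs a_k+\bigl(\langle\bs a_k,\bs x\rangle-y\bigr)\,s_k^{-1}\,\bs h_k\Bigr)\;\in\;U_n(A),
$$
so the bracket is a reducer for $(\bs f,g)$ --- note it depends on $\bs a_k$ and $\bs h_k$, not only on $\bs x,y,s_k^{-1}$. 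This is the computation your sketch is missing. An alternative, and the route implicit in the paper, is to use Theorem~\ref{appro} together with its addendum: for $\bs f'$ close to $\bs f$ in $I_n(g)$ one gets $\bs f'=\bs f\,(e^M)^t+g\,\bs w$ with $M\in\mathcal M_n(A)$, and from this identity one reads off directly that $\bs f'\in R_n(g)\iff\bs f\in R_n(g)$, giving openness of $R_n(g)$ and of its complement in $I_n(g)$ in one stroke; this is exactly how the paper handles the analogous Theorem~\ref{clopenredu-exp}.
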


The next assertion, which gives us a relation between reducibility in a Banach algebra $A$   and 
the associated uniform algebra $C(M(A))$ of all continuous complex-valued functions on the spectrum
$M(A)$  of $A$, actually is one of the most important  theorems in the theory of the Bass stable rank: 

\begin{theorem}[Corach-Su\'arez]\label{redu-cx=a}(\cite[p. 4]{cs}).
 Let $A$ be a commutative unital complex Banach algebra and suppose that $(f_1,\dots, f_n,g)$
 is an invertible $(n+1)$-tuple in $A$. Then $(f_1,\dots, f_n,g)$ is reducible in $A$ if and only if
 $(\widehat f_1,\dots, \widehat f_n, \widehat g)$ is reducible in $C(M(A))$.
\end{theorem}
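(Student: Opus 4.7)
The forward implication is immediate: the Gelfand transform $A\to C(M(A))$, $a\mapsto\widehat a$, is a unital algebra homomorphism, so applying it to a B\'ezout identity $\sum_{j=1}^n y_j(f_j+a_jg)=\bs 1$ in $A$ yields the corresponding identity $\sum_{j=1}^n \widehat{y_j}(\widehat{f_j}+\widehat{a_j}\widehat g)=\bs 1$ in $C(M(A))$.

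For the converse, fix $\bs h\in C(M(A))^n$ with $|\widehat{\bs f}+\bs h\widehat g|\ge 2\eta>0$ on $M(A)$. The aim is to exhibit $\bs a\in A^n$ for which $\widehat{\bs f}+\widehat{\bs a}\widehat g$ is nowhere zero on $M(A)$; by the Gelfand criterion that a tuple in $A^n$ is invertible iff its components have no common zero on $M(A)$, this gives $\bs f+\bs a g\in U_n(A)$, which is the required reducibility. As a preparation, since $(\bs f,g)\in U_{n+1}(A)$ the Gelfand transforms $\widehat{\bs f}$ and $\widehat g$ have no common zero, so compactness of $M(A)$ yields $\delta,\epsilon>0$ with $|\widehat{\bs f}|\ge 2\delta$ on the open neighborhood $V:=\{|\widehat g|<\epsilon\}$ of $Z(\widehat g)$. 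On $V$ any tuple $\bs a\in A^n$ with $\|\bs a\|\le\delta/\epsilon$ gives $|\widehat{\bs a}\widehat g|\le\delta$, hence $|\widehat{\bs f}+\widehat{\bs a}\widehat g|\ge\delta$. So the work is confined to the compact set $K:=M(A)\setminus V$, where $|\widehat g|\ge\epsilon$.

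The heart of the argument, and what I expect to be the principal obstacle, is to engineer $\bs a\in A^n$ with $\widehat{\bs f}+\widehat{\bs a}\widehat g$ nonvanishing on $K$, using $\bs h$ only as a template. Since $\widehat A$ is in general not dense in $C(M(A))$, $\bs h|_K$ cannot be uniformly approximated by Gelfand transforms, so direct approximation fails. The plan is to combine two tools. First, Lemma~\ref{clopenreduc}, applied inside the ambient algebra $C(M(A))$, tells us that the $C(M(A))$-reducing perturbations form an open subset of $I_n(\widehat g)$; therefore only an \emph{approximate} match of $\bs h$ is needed, and small errors are absorbed. Second, we exploit the holomorphic functional calculus at $g$ (available because $A$ is complex and commutative) to construct cutoff-type and partial-inverse elements of $A$ that decouple the behavior on $V$ from that on $K$. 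One then builds $\bs a$ iteratively, moving $\bs f$ inside $I_n(g)$ by small increments, each chosen so that the corresponding Gelfand-image translates $\widehat{\bs f}$ closer to the nonvanishing target $\widehat{\bs f}+\bs h\widehat g$ on $K$, while controlling the norm to preserve the $\delta$-lower bound on $V$. Lemma~\ref{clopenreduc} is used at every step to guarantee that reducibility is preserved.

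Once such an $\bs a$ is obtained, the regional estimates on $V$ and $K$ combine to show that $\widehat{\bs f}+\widehat{\bs a}\widehat g$ is nowhere zero on $M(A)$. By the Gelfand criterion, $\bs f+\bs a g\in U_n(A)$, which is exactly the reducibility of $(\bs f,g)$ in $A$ that was to be established.
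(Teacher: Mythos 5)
First, a contextual remark: the paper does not prove this theorem. It quotes it as a known result of Corach--Su\'arez (\cite{cs}), and later uses the Arens--Novodvorski--Taylor theorem (Theorem~\ref{arnt}) to prove the analogous statement for reducibility to the principal component (Theorem~\ref{expreduvector}). So there is no in-paper proof to compare against directly; I will assess your argument on its own merits and against the machinery the paper itself brings to bear on the companion result.

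Your forward direction is correct: the Gelfand transform is a unital homomorphism, so a B\'ezout identity witnessing reducibility in $A$ pushes forward to $C(M(A))$.

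The converse contains a genuine gap, and you essentially flag it yourself. After correctly reducing the problem to producing $\bs a\in A^n$ with $\widehat{\bs f}+\widehat{\bs a}\,\widehat g$ nonvanishing on $K=M(A)\setminus V$, and correctly observing that $\widehat A$ is in general not uniformly dense in $C(M(A))$ so that $\bs h|_K$ cannot be approximated by Gelfand transforms, you propose to bridge this by ``holomorphic functional calculus at $g$'' plus an unspecified iteration controlled by Lemma~\ref{clopenreduc}. This does not close the gap. Functional calculus at the single element $g$ only produces elements $\varphi(g)$ with $\varphi$ holomorphic on a neighborhood of $\sigma(g)\subset\C$; this is far too poor a class to manufacture cutoffs that decouple $K$ from $V$, let alone to steer $\widehat{\bs f}$ toward $\widehat{\bs f}+\bs h\widehat g$ on $K$. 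The iterative scheme is not described concretely, and each step would face the same density obstruction you already identified; Lemma~\ref{clopenreduc} (applied in $A$) can transfer reducibility along a path $\phi:[0,1]\to I_n(g)$ only once you have \emph{found} such a path in $A^n$ ending at a reducible tuple, which is precisely what must be constructed.

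What is actually needed is the deep topological input that the paper invokes for the analogous Theorem~\ref{expreduvector}: the Arens--Novodvorski--Taylor theorem (Theorem~\ref{arnt}). Its content is exactly that, despite $\widehat A$ not being dense in $C(M(A))$, the Gelfand transform induces a bijection at the level of connected components of $U_n(\cdot)$: every component of $U_n(C(M(A)))$ contains some $\widehat{\bs u}$ with $\bs u\in U_n(A)$, and two tuples $\bs u,\bs v\in U_n(A)$ lie in the same component of $U_n(A)$ iff $\widehat{\bs u},\widehat{\bs v}$ do in $U_n(C(M(A)))$. Combining this with the $A$-convexity of $E=Z(\widehat g)$ (so that $M(A_E)=E$ for $A_E=\ov{\widehat A|_E}$) and the open-closedness from Lemma~\ref{clopenreduc} gives the converse, exactly as in the paper's proof of Theorem~\ref{expreduvector}. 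Your sketch omits this ingredient and replaces it with an approximation scheme that cannot work for the reason you yourself state. I would recommend rewriting the converse using Theorem~\ref{arnt}\,(2)--(3) together with the restriction-algebra trick and Lemma~\ref{reduaufE}, mirroring the paper's proof of Theorem~\ref{expreduvector} with $\mathcal P(U_n(\cdot))$ replaced by an arbitrary component meeting $U_n(\cdot)$.
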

Here is now the scheme of the paper.
In Section two we have a look at the principal components of $\mathcal M_n(A)$ and $U_n(A)$
and in Section three we give a  connection between reducibility and the
 extension of  invertible rows to invertible  matrices in the principal component of $\mathcal M_n(A)$.

In the forth section of our paper we are concerned with the analogues of the results quoted above
for our new notion of  ``reducibility of $(n+1)$-tuples in $A$ to the principal component of 
$U_n(A)$''
(see below for the definition).  
In the fifth section  we  apply these results and give  geometric/topological
conditions under which $(n+1)$-tuples in $C(X,\K)$ for $X\ss\K^n$ are reducible, respectively
reducible to the principal component of $U_n(C(X,\K))$. Let us point out that
due to Vasershtein's work, the Bass stable rank of $C(X,\K)$ is less than or equal to $n+1$;
hence every invertible $(n+2)$-tuple in $C(X,\K)$ is reducible, but in general, not every tuple
having length less than $n+1$ is reducible.  In the sixth section we apply our results to the class
of Euclidean Banach algebras. In Section 8 we give a simple proof of a result by V. Ya. Lin
telling us that a  left-invertible matrix $L$ over $A$ 
can be complemented to an invertible matrix over $A$ if and only 
if the matrix $\hat L$ of its Gelfand transforms
can be complemented in the algebra $C(M(A))$.

\section{The principal components of $\mathcal M_n(A)$ and $U_n(A)$} 
%%%%

In this section we  expose for the reader's convenience several results necessary to develop
our theory, and  whose  proofs  we  could not locate in the literature (in particular for the case
of real algebras).
First, let us recall that if $A=(A,||\cdot||)$ is a commutative unital Banach algebra
over $\K$, then the principal component $Exp\, \mathcal M_n(A)$ 
(=the connected component of the identity matrix $I_n$)
of the group of invertible $n\times n$-matrices over $A$ is given by 
$$Exp\,\mathcal M_n(A)=\{ e^{M_1}\,\cdots\, e^{M_k}: M_j\in \mathcal M_n(A)\}.$$
 (see \cite[p. 201]{pal})

Our description of the connected components of the set $U_n(A)$, viewed as a topological
subspace of $A^n$, is based on the following  classical result giving a relation between 
two invertible tuples that are close to each other. An elementary proof of that result 
(excepted the addendum) is given in \cite{ru99}.

  \begin{theorem}\label{appro}
   Let $A=(A,||\cdot||)$ be a commutative unital Banach algebra over $\K$.
    Suppose that $\mathbf f=(f_1,\dots,f_n)$ is an invertible $n$-tuple
   in $A$.  Then there exists $\e>0$ such that the following is true:
   \begin{enumerate}
   \item[{}]

   For each $\mathbf g=(g_1,\dots,g_n)\in A^n$ satisfying  $\sum_{j=1}^n ||g_j-f_j||<\e$
   there is a matrix $H\in \mathcal M_n(A)$ such that
   $$ \left(\begin{matrix} g_1\\ \vdots\\ g_n\end{matrix}\right) =
 (  \exp \,H )\left(\begin{matrix} f_1\\ \vdots\\ f_n\end{matrix}\right) .$$
   \end{enumerate}
   In particular, $\mathbf g$ itself is  an invertible $n$-tuple. Moreover, if $\bs u \cdot \bs f^t=\bs 1$,
   then $$\bs u e^{-H} \cdot \bs g^t=\bs 1.$$ 
Addendum:  if $f_n=g_n$, then $H$ can be chosen so that its last row is the zero vector
   and 
   $$e^H=\left(\begin{matrix} e^M &*\\ \bs 0_{n-1}& {\bs 1} 
   \end{matrix}\right)$$
   for some matrix $M\in \mathcal M_{n-1}(A)$.
\end{theorem}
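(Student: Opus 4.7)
My plan is to produce $H$ explicitly through the principal logarithm of a small perturbation of the identity. Since $\bs f$ is invertible, pick a row $\bs u=(u_1,\dots,u_n)\in A^n$ with $\bs u\cdot\bs f^t=\bs 1$ and set
$$T\;:=\;I_n+(\bs g^t-\bs f^t)\,\bs u\;\in\;\mathcal M_n(A),$$
the second summand being the outer (column-times-row) product with $(i,j)$-entry $(g_i-f_i)u_j$. A one-line check gives $T\bs f^t=\bs f^t+(\bs g^t-\bs f^t)\,(\bs u\,\bs f^t)=\bs g^t$.

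Next, equip $\mathcal M_n(A)$ with a submultiplicative norm compatible with that of $A$. Then
$$\|T-I_n\|\;\le\; C\,\|\bs u\|\,\sum_{j=1}^n\|g_j-f_j\|,$$
so choosing $\e$ sufficiently small forces $\|I_n-T\|<1$. The principal logarithm
$$H\;:=\;\log T\;=\;-\sum_{k=1}^\infty\frac{(I_n-T)^k}{k}$$
then converges in $\mathcal M_n(A)$ and satisfies $e^H=T$, hence $e^H\bs f^t=\bs g^t$. In particular $\bs g$ itself is invertible, and multiplying this identity on the left by $\bs u\,e^{-H}$ gives $\bs u\,e^{-H}\cdot\bs g^t=\bs u\cdot\bs f^t=\bs 1$.

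For the addendum, when $f_n=g_n$ the last entry of $\bs g^t-\bs f^t$ vanishes, so the last row of $T-I_n$ is the zero row. Writing $T$ in block form
$$T=\begin{pmatrix}B & \bs v\\ \bs 0_{n-1} & \bs 1\end{pmatrix},\qquad B\in\mathcal M_{n-1}(A),\ \ \|B-I_{n-1}\|<1,$$
a short induction shows that every power $(I_n-T)^k$ keeps the same shape (last row zero, last diagonal entry zero). Consequently the series for $H=\log T$ stays inside that block subspace, and the upper-left $(n-1)\times(n-1)$ block of $H$ is precisely $M:=\log B$; thus
$$H=\begin{pmatrix}M & *\\ \bs 0_{n-1} & 0\end{pmatrix},\qquad e^H=\begin{pmatrix}e^M & *\\ \bs 0_{n-1} & \bs 1\end{pmatrix},$$
the second equality following by exponentiating term-by-term (the $\bs 1$ in the bottom-right corner arises from $e^0$).

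The only non-bookkeeping ingredient is the existence of a submultiplicative matrix norm on $\mathcal M_n(A)$ controlling outer products, which is standard. The main obstacle is the addendum: one must verify that the block shape "last row zero and last diagonal entry zero" is closed under multiplication, hence preserved under every partial sum of the series defining $\log$ and $\exp$. This is a routine induction once the first two powers of $I_n-T$ are written out, and no step involves more than matrix multiplication together with the geometric-series estimate used for convergence.
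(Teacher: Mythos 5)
Your proof is correct, and it is the natural argument: since $\bs u\cdot\bs f^t=\bs 1$, the rank-one update $T=I_n+(\bs g^t-\bs f^t)\bs u$ satisfies $T\bs f^t=\bs g^t$, and when $\sum_j\|g_j-f_j\|$ is small one has $\|I_n-T\|<1$ in a submultiplicative norm, so $H:=\log T$ exists and $e^H\bs f^t=\bs g^t$; the "in particular" and "moreover" claims then follow immediately. The paper itself gives no in-text proof -- it cites \cite{ru99} for the statement, explicitly excluding the addendum -- so there is nothing in the source to compare against line by line, but the rank-one-plus-logarithm route is the standard one and is almost certainly the argument of that reference. Your handling of the addendum (which the paper leaves unproved) is sound: if $f_n=g_n$ then $(I_n-T)=-(\bs g^t-\bs f^t)\bs u$ has a zero last row, a property preserved under forming $(I_n-T)^k=(I_n-T)(I_n-T)^{k-1}$, so $H$ has the block form $\left(\begin{smallmatrix}M&*\\ \bs 0&0\end{smallmatrix}\right)$ with $M=\log B$, and exponentiating such a block-upper-triangular $H$ gives $e^M$ in the top-left block and $\bs 1$ in the bottom-right corner. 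The only point worth tightening is the passing reference to "a submultiplicative norm compatible with that of $A$": one should name it, e.g.\ the entrywise $\ell^1$-norm $\|M\|:=\sum_{i,j}\|m_{ij}\|$ on $\mathcal M_n(A)$, which is submultiplicative and gives exactly the bound $\|T-I_n\|\le\big(\sum_j\|u_j\|\big)\sum_j\|g_j-f_j\|$ used to make $\log T$ converge.
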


\begin{theorem}\label{compo}
Let $A$ be a commutative unital Banach algebra over $\K$.
  If $\mathbf f=(f_1,\dots,f_n)\in U_n(A)$, then the connected component, $C(\mathbf f)$, 
   of   $\;\mathbf f$ in $U_n(A)$  equals the set
   $${\mathbf f}\,\cdot\, Exp\,\mathcal M_n(A). $$
   In particular, $C(\mathbf f)$, is path-connected.
\end{theorem}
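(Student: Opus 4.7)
Let $E := \mathrm{Exp}\,\mathcal{M}_n(A)$. My plan is to show two things: first, that the set $\mathbf{f}\cdot E$ is path-connected, contains $\mathbf{f}$, and lies inside $U_n(A)$, so that $\mathbf{f}\cdot E\subseteq C(\mathbf{f})$; second, that $\mathbf{f}\cdot E$ is open-closed in $U_n(A)$, so that $C(\mathbf{f})\subseteq \mathbf{f}\cdot E$. Both inclusions together will give equality and path-connectedness.

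For the first inclusion, I note that if $H\in E$ then $H$ is invertible, so writing $\mathbf{u}\cdot\mathbf{f}^t=\mathbf{1}$ for some $\mathbf{u}\in A^n$, the vector $\mathbf{v}:=\mathbf{u}(H^{-1})^t$ satisfies $\mathbf{v}\cdot(\mathbf{f}H)^t=\mathbf{1}$, so $\mathbf{f}\cdot H\in U_n(A)$. Path-connectedness of $E$ is standard: a finite product $e^{M_1}\cdots e^{M_k}$ is joined to $I_n$ by concatenating the paths $t\mapsto e^{tM_j}$; hence $\mathbf{f}\cdot E$ is path-connected as the continuous image of $E$ under right-multiplication by $\mathbf{f}$. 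Taking $H=I_n$ shows $\mathbf{f}\in \mathbf{f}\cdot E$, so $\mathbf{f}\cdot E\subseteq C(\mathbf{f})$.

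The heart of the argument is showing $\mathbf{f}\cdot E$ is open-closed in $U_n(A)$, and this is where Theorem~\ref{appro} does all the work. Openness: given $\mathbf{g}=\mathbf{f}\cdot K\in \mathbf{f}\cdot E$ with $K\in E$, Theorem~\ref{appro} provides $\varepsilon>0$ such that every $\mathbf{g}'\in U_n(A)$ within $\varepsilon$ of $\mathbf{g}$ satisfies $(\mathbf{g}')^t=e^H\mathbf{g}^t$ for some $H\in\mathcal{M}_n(A)$, equivalently $\mathbf{g}'=\mathbf{g}\cdot e^{H^t}=\mathbf{f}\cdot(K\,e^{H^t})$; since $E$ is a subgroup of the invertible matrices (the identity component of a topological group is always a subgroup), $K\,e^{H^t}\in E$ and hence $\mathbf{g}'\in\mathbf{f}\cdot E$. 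Closedness: if $\mathbf{g}'\in U_n(A)$ is a limit point of $\mathbf{f}\cdot E$, apply Theorem~\ref{appro} at $\mathbf{g}'$ to obtain $\varepsilon>0$; some $\mathbf{g}=\mathbf{f}\cdot K\in\mathbf{f}\cdot E$ falls inside the $\varepsilon$-neighborhood of $\mathbf{g}'$, giving $\mathbf{g}^t=e^H(\mathbf{g}')^t$, i.e.\ $\mathbf{g}'=\mathbf{f}\cdot(K\,e^{-H^t})\in\mathbf{f}\cdot E$. Thus $\mathbf{f}\cdot E$ is clopen in $U_n(A)$, contains $\mathbf{f}$, and hence contains the whole component $C(\mathbf{f})$.

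The only genuine subtlety I foresee is the bookkeeping between left-multiplication on column vectors (as stated in Theorem~\ref{appro}) and right-multiplication on row vectors (as in the statement to be proved), which is resolved by the identity $(e^H)^t=e^{H^t}$ together with the fact that $E$ is closed under transposition. Everything else reduces to the two facts already in hand: Theorem~\ref{appro} and the algebraic fact that the identity component of a topological group is a subgroup.
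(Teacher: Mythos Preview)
Your proposal is correct and follows essentially the same route as the paper: show that $\mathbf{f}\cdot\mathrm{Exp}\,\mathcal{M}_n(A)$ is path-connected, then use Theorem~\ref{appro} twice to prove it is open and (relatively) closed in $U_n(A)$, and conclude that it must coincide with $C(\mathbf{f})$. The paper's proof is virtually identical, including the transpose maneuver $(\exp H)^t=\exp(H^t)$; the only cosmetic differences are that the paper writes the connecting path as $t\mapsto \mathbf{f}\,e^{tM_1}\cdots e^{tM_k}$ directly rather than concatenating, and verifies $K\,e^{H^t}\in E$ simply from the definition of $E$ as finite products of exponentials rather than invoking the general fact about identity components of topological groups.
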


\begin{proof}
Let $\mathfrak C= {\mathbf f}\,\cdot\, Exp\,\mathcal M_n(A)$; that is 
$$ \mathfrak C  =\left\{{\mathbf f}\cdot \Big( \prod_{j=1}^k \exp\,{M_j}\Big): \;M_j\in \mathcal M_n(A), k\in \N\right\}.$$
We first note that $ \mathfrak C$ is path-connected. To see this,  let 
$$\bs g=\bs f\cdot \exp ( M_1) \cdots \exp(M_k),$$
for some $M_j\in \mathcal M_n(A)$. Then the map $\phi: [0,1]\to U_n(A)$, given by
$$\phi(t):= \bs f \cdot  \exp (t M_1) \cdots \exp(tM_k),$$
is a continuous path joining $\bs f$ to $\bs g$ within $U_n(A)$.

We claim that $\mathfrak C$ is open and closed in $U_n(A)$.  In fact, let $\mathbf g\in \mathfrak C$.
According to  Theorem \ref{appro}, there is $\e>0$  so that 
 for every $\mathbf h\in A^n$ with $$\sum_{j=1}^n||g_j-h_j||< \e,$$
  there is matrix $M\in \mathcal M_n(A)$ such that 
  $\mathbf h^t=(\exp\,M)\, \mathbf g^t$. That is $\mathbf h = \mathbf g \cdot \exp\, M^{\,t}$.
Therefore $\mathbf h \in \mathfrak C$. Hence $\bs g$ is an interior point of $\mathfrak C$.
Thus $\mathfrak C$ is open in $A^n$. Since $U_n(A)$ itself is open  in $A^n$, we conclude that
$\mathfrak C$ is open in $U_n(A)$.

To show that $\mathfrak C$ is (relatively) closed in $U_n(A)$, we take a sequence 
$(\mathbf g_k)$  in $\mathfrak C$ that converges (in the product topology of $A^n$) to $\mathbf g\in U_n(A)$.
Applying Theorem \ref{appro} again, there is $\e>0$  so that 
 for every $\mathbf h\in A^n$ with $\sum_{j=1}^n||g_j-h_j||< \e$, there is matrix $M\in \mathcal M_n(A)$, depending on $\mathbf h$,  
 such that  $\mathbf h= \mathbf g \,\cdot\,\exp\, M^{\,t}$.
   This holds  in particular for $\mathbf h =\mathbf g_k$, whenever $k$ is large.
 Thus
 $\mathbf g=  \mathbf  g_k\,\cdot\, \exp\, M_k$  for some $M_k\in \mathcal M_n(A)$, from which we conclude
 that $\mathbf g\in \mathfrak C$. Hence $\mathfrak C$ is closed in $U_n(A)$.
 
 Being open-closed and connected now implies that $\mathfrak C$ is the maximal
 connected set containing itself. Hence, with $\bs f\in \mathfrak C$, we deduce that 
 $C(\bs f)=\mathfrak C$.  
 \end{proof}

 %%%%
 We are now able to define the main object of this paper: 
  
 \begin{definition}
 Let $A$ be a commutative unital Banach algebra over $\K$. Then the {\it principal component
of  $U_n(A)$} is the connected component of $\bs e_1$  in $U_n(A)$ and is given by the set
 $$\mathcal P(U_n(A)):=\bs e_1 \cdot  Exp\, \mathcal M_n(A).$$
\end{definition}
 %%%%%
 \begin{remark} 
 $\bullet$~~  If $n=1$, then $U_1(A)=A^{-1}$ and 
 $$\mathcal P(U_n(A))=\exp A:=\{e^a\colon a\in A\}.$$
 $\bullet$~~
 Let us note that in the representation $\bs e_1 \cdot Exp\, \mathcal M_n(A)$
 of the principal component of $U_n(A)$
 any other ``canonical" element $\bs e_j$, with 
 $$\bs e_j:=(0,\dots, 0,\underbrace{\bs 1}_{j}, 0,\dots, 0),$$
 is admissible, too.
 In fact,  for $i\not=j$,
$$\gamma_{i,j}(t):=(1-t) \bs e_i+t\bs e_j$$
is a path  in $U_n(A)$ joining $\bs e_i$ with $\bs e_j$, because
$$\Big< (1-t) \bs e_i+t\,\bs e_j\;,\; \bs e_i+\bs e_j\Big>={\bs 1}.$$
 Hence $\bs e_i$ and $\bs e_j$
belong to the same connected component of $U_n(A)$.
 \end{remark}

\section{Extension of invertible rows to the principal component}

 An interesting connection between reducibility and extension of rows to invertible
 matrices (resp. to matrices in the principal component)  is given in the following theorem
(see for example \cite[p. 311]{ri} and \cite[p. 1129]{quaro}). The additional property
of being extendable to finite products of exponential matrices (hence to the principal component of $\mathcal M_n(A)$), seems not to have been
considered in the literature before (as far as we know).  
%%%%
\begin{theorem}\label{extendingrows}
Let $A$ be a commutative unital Banach algebra over $\K$ with unit element ${\bs 1}$.
Suppose that  $\bs u:=(f_1,\dots,f_n,g)\in U_{n+1}(A)$ is reducible.
Then  there is an invertible matrix $W\in \,\mathcal M_{n+1}(A)$ with determinant 
 ${\bs 1}$  and which is  
a finite product of exponential matrices 
such that $\bs u \;W= \bs e_1$.  In other words, $\bs u\in \mathcal P(U_n(A))$.
 Moreover, if $M=W^{-1}$, then $\bs u$ is the first row of $M$. 
 
\end{theorem}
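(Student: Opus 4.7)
The plan is to build $W$ explicitly as a short product of five exponential matrices, each one implementing an elementary column operation on $\bs u$. Using reducibility, first fix $\bs a = (a_1, \dots, a_n) \in A^n$ with $\bs h := (f_1 + a_1 g, \dots, f_n + a_n g) \in U_n(A)$, and then pick $\bs x = (x_1, \dots, x_n) \in A^n$ with $\sum_{j=1}^n x_j h_j = \bs 1$. These two witnesses are essentially all the data I need.

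The first four factors $E_1, E_2, E_3, E_4$ will all be unipotent shears of the block form $I_{n+1} + N$ with $N^2 = 0$; each such factor therefore coincides with $\exp(N)$, lies in $Exp\,\mathcal M_{n+1}(A)$, and has determinant $\bs 1$. In order: $E_1$ adds $a_j$ times the last column to column $j$, sending $\bs u$ to $(\bs h, g)$; next $E_2$ subtracts $g x_j$ times column $j$ from the last column, killing $g$ to produce $(\bs h, 0)$; then $E_3$ adds $x_j$ times column $j$ back to the last column, which, using $\sum x_j h_j = \bs 1$, yields $(\bs h, \bs 1)$; finally $E_4$ subtracts $h_j$ times the last column from column $j$, clearing $\bs h$ to leave $\bs e_{n+1} = (\bs 0, \bs 1)$.

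It remains to move $\bs e_{n+1}$ to $\bs e_1$ by an exponential matrix of determinant $\bs 1$. For this I would take the $90^\circ$ rotation in the $(1, n+1)$-plane, namely $R := \exp((\pi/2) J)$ where $J \in \mathcal M_{n+1}(A)$ has $J_{1, n+1} = -\bs 1$, $J_{n+1, 1} = \bs 1$, and all other entries zero. Since $J^2$ equals $-\bs 1$ on the two diagonal positions $(1,1)$ and $(n+1, n+1)$ and is zero elsewhere, $R$ acts as the identity on coordinates $2, \dots, n$ and as the standard rotation $\bigl(\begin{smallmatrix} 0 & -\bs 1 \\ \bs 1 & 0 \end{smallmatrix}\bigr)$ on the $(1, n+1)$-plane; in particular $\det R = \bs 1$ and $\bs e_{n+1} \cdot R = \bs e_1$. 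Setting $W := E_1 E_2 E_3 E_4 R$ then gives $\bs u \cdot W = \bs e_1$ with $W$ a product of five exponential matrices and $\det W = \bs 1$, so $\bs u \in \mathcal P(U_{n+1}(A))$. The addendum follows immediately: applying $W^{-1}$ to $\bs u \cdot W = \bs e_1$ yields $\bs u = \bs e_1 \cdot M$ for $M := W^{-1}$, which is precisely the first row of $M$.

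The main obstacle I expect is the final move, from $\bs e_{n+1}$ to $\bs e_1$. A permutation matrix swaps these combinatorially, but its determinant is $-\bs 1$ and it is not a priori expressible as a single exponential, so it would wreck both desired properties of $W$. Replacing the permutation by a genuine rotation bypasses both difficulties simultaneously and keeps the whole product inside $Exp\,\mathcal M_{n+1}(A)$ with determinant $\bs 1$. By contrast, the preceding four shear steps are essentially forced by the data --- they are the column operations that witness reducibility --- and each becomes an exponential for free because its nilpotent part squares to zero.
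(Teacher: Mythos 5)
Your proof is correct and follows essentially the same strategy as the paper's: use the reducibility data to build a sequence of unipotent column shears, each of the form $I_{n+1}+N$ with $N$ nilpotent and hence trivially a single exponential, that carry $\bs u$ to $\bs e_{n+1}$, and then apply a final ``permutation-type'' matrix of determinant $\bs 1$ to land at $\bs e_1$. The one place you genuinely improve on the paper's write-up is the last step: the paper uses a real $(n+1)\times(n+1)$ matrix $W_3$ with $\det W_3=1$ and \emph{cites} another reference for the fact that it is a product of two exponentials, whereas your rotation $R=\exp\bigl(\tfrac{\pi}{2}J\bigr)$, with $J$ the infinitesimal rotation in the $(1,n{+}1)$-plane, is a single explicit exponential over $\K\cdot\bs 1\subseteq A$ and is verified directly from $J^2=-(E_{1,1}+E_{n+1,n+1})$, $J^3=-J$, $J^4=-J^2$. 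This makes your argument entirely self-contained and works uniformly over $\K=\R$ or $\C$, which is a small but real advantage; otherwise the two proofs coincide (the paper's $W_1=M_1M_2$ and $W_2$ correspond to your $E_1,\ E_2E_3,\ E_4$, where $E_2E_3$ merges into the single shear the paper obtains by solving $\bs y\cdot(\bs f+g\bs x)^t=\bs 1-g$).
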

\begin{proof}
Let $\bs f=(f_1,\dots,f_n)$. Since $\bs u=(\bs f, g)$ is reducible, 
there exists $\bs x=(x_1,\dots,x_n) \in A^n$ with $\bs  f+g\; \bs  x\in U_n(A)$.
Hence there is $\bs  y=(y_1,\dots,y_n)\in A^n$ such that \footnote{ Note that we do want the 
element ${\bs 1}-g$ here on the right-hand side.}
$$ \bs   y\cdot (\bs  f+ g\bs  x)^t={\bs 1}-g.$$
 
Consider the matrices 
$$W_1=\left[\begin{array}{ccccc}
{\bs 1}&&&&y_1\\
0&{\bs 1}&&&y_2\\
\vdots&&\ddots&&\vdots\\
0&\cdots&\cdots&{\bs 1}&y_n\\
x_1&x_2&\dots&x_n~~&\bs  y\cdot\bs  x^t +{\bs 1}
\end{array}\right]$$

$$W_2=\left[\begin{array}{cccc}
{\bs 1}&&&\\
&{\bs 1}&&\\
&&\ddots&\\
&&&\\
-(f_1+gx_1)&\cdots&-(f_n+gx_n)~~&{\bs 1}
\end{array}\right]$$

Since $$W_1=\left[\begin{array}{ccccc}
{\bs 1}&0&\cdots&\cdots&0\\
0&{\bs 1}&&&\vdots\\
\vdots&&\ddots&&\vdots\\
0&\cdots&\cdots&{\bs 1}&0\\
x_1&x_2&\dots&x_n&{\bs 1} 
\end{array}\right]~ \cdot ~
\left[\begin{array}{ccccc}
{\bs 1}&0&\cdots&&y_1\\
0&\ddots&&&y_2\\
\vdots&&\ddots&&\vdots\\
&&&{\bs 1}&y_n\\
0&\cdots&\cdots&0&{\bs 1}
\end{array}\right]=:M_1M_2,$$
it is easy to see that $W_1$ and $W_2$ are invertible matrices in $\mathcal M_{n+1}(A)$
 with determinant ${\bs 1}$
satisfying \footnote{  The matrix multiplication here is preferably done from the left to the right:
first multiply the one-row matrix with $W_1$, then go on.}
$$
(f_1,\dots,f_n,g) \;W_1 W_2=(0,\dots,0,{\bs 1}) \in A^{n+1}.
$$

Let $W_3=\Bigl[ \begin{array}{c|c|c|c|c}\bs e_{n+1}^t\; & \;(-1)^n \bs e_1^t\;&\;\bs e_2^t&\dots&
 \bs e_n^t\end{array}\Bigr] \;\;\; \in \;\;\mathcal M_{n+1}(A)$; that is (when identifying 
 $\R\cdot {\bs 1}$ with $\R$), 
 $$W_3=\left[ \begin{matrix} 0 & (-1)^n&0&0& \dots &0\\
                                                 0& 0&1&0&\dots&0\\
                                                 \vdots &\vdots & &\ddots~~&&\\
                                                  \vdots & \vdots& &&\ddots&\\
                                                 0&0 && &&1\\
                                                  1 & 0 &\dots &\dots&\dots& 0\end{matrix}\right].
$$
Note that $\bs e_{n+1}W_3=\bs e_1$ and $\det W_3=1$. If we put 
$W= W_1W_2 W_3$,  
then $W$ is invertible in $\mathcal M_{n+1}(A)$, $\det W={\bs 1}$,  and  $\bs u W = \bs e_1$, where $\bs e_1\in A^{n+1}$.
 Write $W^{-1}=\left[\begin{array}{c}
\bs w\\ V\end{array}\right]$, where $\bs  w$ is the first row.
  It is easy to see that $\bs u=\bs w$. 
Thus
$$\left[\begin{array}{c}
 \bs u\\ V\end{array}\right] W =I_{n+1}.$$
 Hence the row $\bs u $ has been extended by $V$ to an invertible matrix $M:=W^{-1}$.

Note that $M_1$ and $M_2$ in the decomposition  $W_1=M_1M_2$,
as well as  $W_2$,  have the form $I_{n+1}+N$, where $N$ is a nilpotent matrix. Hence,
 $I_{n+1}+N=e^B$ for some $B\;\in\;  \mathcal M_{n+1}(A)$ (just use an appropriate finite section of the power series expansion of the real  logarithm $\log(1+x)$).
Moreover, $W_3\;\in \;\mathcal M_{n+1}(\R)$ and $\det W_3>0$.  Thus, 
$W_3$ is a product of  exponential matrices over $\R$ \footnote{Actually,
two exponentials will suffice; see \cite{moru14}.}.  Consequently,  $W=W_1W_2W_3$
is  a finite product of exponential matrices over $A$.
     \end{proof}

\section{Reducibility to the principal component}   

\begin{definition}\label{exporedu1}
 Let $A$ be a commutative unital Banach algebra over $\K=\R$ or $\K=\C$. 
 An invertible pair $(f,g)\in U_2(A)$
 is said to be {\it reducible to the principal component  $\exp A$ of $A^{-1}$} if there exists $u,v\in A$ such that
 $$f+ug=e^v.$$ \index{principal component}
\end{definition}

\begin{center}\rule{4cm}{0.3pt}
\end{center}

It is clear that if $A^{-1}=U_1(A)$ is connected, then  the notions of ``reducibility
of pairs''  and ``reducibility of pairs to the principal component`` coincide. Our favourite example is the disk algebra $A(\D)$.
 If $U_1(A)$ is disconnected, as it is the case for the algebra $C(\T,\C)$  for example, then for every $f\in A^{-1}\setminus \exp A$, the pair $(f,0)$  is reducible, but not  reducible to the principal component of $A^{-1}$.  This notion seems  to have appeared for the first time in Laroco's work 
\cite{lar} in connection with the stable rank of $\H$.   Criteria  for various function algebras
have been established by the second author of this note in \cite{rup,ru91,ru92,ru92L}.

%%%

Now we generalize this notion to tuples, a fact that never before has been considered.  
We propose  two different settings. Here is the first one  (the second one will be dealt with
in Section \ref{sII}).

 \begin{definition}
 Let $A$ be  a commutative unital Banach algebra over $\K$. An invertible $(n+1)$-tuple
 $(\bs f,g)\;\in\; U_{n+1}(A)$ is said to be {\it reducible to the principal component of $U_n(A)$} 
 if there exists $\bs h\in A^n$ such that
 $$\bs f+g\; \bs h \in \mathcal P(U_n(A)).$$
\end{definition}

The following Proposition is pretty clear in the case of complex Banach algebras, 
since every permutation
matrix $P\in \mathcal M_n(\C)$ has a complex logarithm in $\mathcal M_n(\C)$ (see \cite{moru14}).
So what does matter here, is that we consider real algebras, too. 
%%%%

\begin{proposition}\label{permutation}
Let $A$ be a commutative unital Banach algebra over $\K$ and let 
$(\bs f,g)\in U_{n+1}(A)$ be an invertible $(n+1)$-tuple in $A$ which 
 is reducible to the principal component $\mathcal P(U_n(A))$ of $U_n(A)$.
Suppose that   $\bs{\tilde f}$  is a  permutation of $\bs f$. 
Then also the tuple $(\bs{\tilde f},g)$ is reducible to the principal component of $U_n(A)$.
\end{proposition}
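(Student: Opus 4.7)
The plan is to realize the permutation as right-multiplication by a permutation matrix $P\in \mathcal M_n(\K)$, so that $\bs{\tilde f}=\bs f\cdot P$, and then to transport the given witness of reducibility by the same $P$. Concretely, I would take $\bs h\in A^n$ with
$$\bs f+g\bs h=\bs e_1\cdot E,\qquad E=\exp(M_1)\cdots\exp(M_k)\in Exp\,\mathcal M_n(A),$$
and try the candidate $\bs k:=\bs h\cdot P$. Right-multiplication by $P$ yields
$$\bs{\tilde f}+g\bs k=(\bs f+g\bs h)\cdot P=\bs e_1\cdot E\cdot P,$$
so the whole proposition reduces to showing that $\bs e_1\cdot E\cdot P$ still lies in $\mathcal P(U_n(A))$.

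To verify this, I would rewrite $\bs e_1\cdot E\cdot P=(\bs e_1\cdot P)\cdot(P^{-1}EP)$. The conjugation identity $P^{-1}\exp(M)P=\exp(P^{-1}MP)$, applied factor by factor, gives $P^{-1}EP\in Exp\,\mathcal M_n(A)$. On the other hand, the first row of $P$ has a single $\bs 1$ in some position $j$, so $\bs e_1\cdot P=\bs e_j$; by the remark immediately following the definition of $\mathcal P(U_n(A))$ the vector $\bs e_j$ lies in the principal component, and is therefore of the form $\bs e_1\cdot F$ with $F\in Exp\,\mathcal M_n(A)$ (using Theorem \ref{compo}). Combining,
$$\bs{\tilde f}+g\bs k=\bs e_1\cdot F\cdot(P^{-1}EP)\in \bs e_1\cdot Exp\,\mathcal M_n(A)=\mathcal P(U_n(A)),$$
which is exactly what must be shown.

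The main obstacle, and the reason the statement is nontrivial in the real case, is that over $\R$ a permutation matrix with determinant $-1$ is not itself a product of exponentials in $\mathcal M_n(\R)$, so one cannot simply absorb $P$ into the exponential factor on the right. What rescues the argument is the explicit linear path $\gamma_{1,j}(t)=(1-t)\bs e_1+t\bs e_j$ from the remark: it stays inside $U_n(A)$ over either field (its B\'ezout partner being $\bs e_1+\bs e_j$), so $\bs e_j\in \mathcal P(U_n(A))$ even though $P$ itself need not be exponential. This is exactly why the argument goes through uniformly in $\K=\R$ and $\K=\C$.
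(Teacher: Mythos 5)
Your proof is correct and a bit cleaner than the paper's. Both arguments share the same core algebraic step: multiply the B\'ezout identity on the right by the permutation matrix $P$ and use $P^{-1}\exp(M)P=\exp(P^{-1}MP)$ to conjugate the exponential factor. Where you diverge is in how the orientation issue is handled. The paper splits into two cases: for $\det P>0$ it invokes the fact that $P$ itself is a product of two real exponentials, while for $\det P<0$ it first composes with an explicit transposition $S$ (Case~1), where it writes down a concrete matrix $W\in\mathcal M_n(\R)$ with $\det W=1$ and $\bs e_1 W=\bs e_n$, thereby realizing the transposition inside $Exp\,\mathcal M_n(\R)$ by hand. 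You bypass this case distinction entirely by observing $\bs e_1 P=\bs e_j$ and then citing the remark after the definition of $\mathcal P(U_n(A))$ together with Theorem~\ref{compo} to get $\bs e_j=\bs e_1 F$ with $F\in Exp\,\mathcal M_n(A)$, which works uniformly regardless of $\det P$. What the paper buys with its more explicit route is a self-contained, matrix-level argument over $\R$ that does not lean on the structural description of components; what you buy is brevity and a single argument for both signs, at the (small) cost of invoking Theorem~\ref{compo}, which itself depends on the nontrivial perturbation result Theorem~\ref{appro}. Both proofs are valid; yours is the more conceptual.
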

\begin{proof}
Without loss of generality, $n\geq 2$. 

{\bf Case 1} Let $\bs f=(f_1,\dots,f_n)$, ~~ $\bs {\tilde f}=(f_n,f_2,\dots,f_{n-1},f_1)$ and
$$S=\left(\begin{matrix} 0 &\cdots&&&{\bs 1}\\ &{\bs 1}&&&\\ &&\ddots&&\\ &&&
{\bs 1}&\\{\bs 1}&&&\cdots&0\end{matrix}\right).$$
Note that $S=S^{-1}$ and $\det S=-{\bs 1}$. The action of $S$ in
 $A\mapsto AS$ is to interchange the first and last column.
Let $W\in M_n(\R)$ be given by
$$W=\left(\begin{matrix} 0&\cdots&&&1\\
 (-1)^{n-1}~~~&0&&&\\&1&0&&&\\&&~~~\ddots&\ddots~~~&\\0&&&1&0
\end{matrix}\right).
$$
Then  $\det W=1$ and $ \bs e_n=\bs e_1 W$.  In particular $W\;\in\; Exp\; \;\mathcal M_n(\R)$.
Now, by assumption,  $\bs f+g \; \bs x=\bs e_1 M$ for some $M\;\in\; Exp \;\mathcal M_n(A)$. Hence
with $\tilde x:=x\, S$, 
\begin{eqnarray*}
\bs{\tilde f} + g\; \bs{\tilde x}& =&(\bs f+g\;\bs x) S=
 \bs e_1 \; M S\\
&=&(\bs e_n S)(MS)=\bs e_n (SMS)\\
&=& (\bs e_1 W)\; (SMS)= \bs e_1 (W \;SMS)\\
&\in&\bs e_1\cdot \mathcal M_n(A)=\mathcal P(U_n(A)),
\end{eqnarray*}
where we have used that $M\in Exp\; \mathcal M_n(A)$ if and only if 
$S^{-1}M S\;\in\; Exp\; \mathcal M_n(A)$ for every invertible matrix $S$;  just observe that
$$S^{-1}\; \big(   \prod_{j=1}^ke^{M_j}\big) S= \prod_{j=1}^k (S^{-1} e^{M_j} S)=\prod_{j=1}^k 
e^{S^{-1}M_j S}.
$$

{\bf Case 2} Let $\bs{\tilde  f}$ be an arbitrary permutation of $\bs f$.
Hence $\bs{\tilde f} = \bs f \; P$ for some permutation matrix $P$.
If $\det P>0$ then,   $P=e^{P_1}e^{P_2}$ for some
matrices $M_j\in \mathcal M_n(\R)$ (see for example \cite{moru14}).
 If $\det P<0$, then we aditionally interchange via $S$  the first
coordinate with the last one in $\bs{\tilde f}$. Let us call this new $n$-tuple $\bs F$. Then 
$\bs F= \bs f \;Q$ for some permutation matrix $Q$ with $\det Q>0$, and again
$Q=e^{Q_1}e^{Q_2}$  for some $Q_j\in M_n(\R)$. 
Now, by assumption, there exists $\bs x\in A^n$ such that
$$\bs f +g\; \bs x =\bs e_1 \; e^{M_1}\dots e^{M_k}$$
for some matrices $M_j\in \mathcal M_n(A)$. 
Hence, by multiplying at the right with $Q$,
$$\bs f \; Q + g\; \bs x Q = \bs e_1\; e^{M_1}\dots e^{M_k}e^{Q_1}e^{Q_2}.$$
Thus $(\bs F,g)$ is reducible to the principal component of $U_n(A)$.
The first case now implies that the same holds for $(\bs{\tilde f}, g).$ 
     \end{proof}

A sufficient condition for reducibility to the principal component is given in the following
technical result:

\begin{lemma}\label{tech-princi}
Let $A$ be a commutative unital Banach algebra over $\K$.  Suppose that
$(\bs f,g)\in U_{n+1}(A)$ and $n\geq 2$.  Then $(\bs f,g)$ is reducible to the principal component
of $U_n(A)$ if there exist   two vectors $\bs x\in A^n$ and  $\bs v\in U_n(A)$ such that
$\bs v$ is reducible itself with respect to some of its coordinates \footnote{
 This means that there exists $i_0$  and $a_j\in A$, $(j=1,\dots, n-1)$, such that for
  $\bs v=(v_1,\dots,v_{i_0}, \dots, v_n)$, the vector 
  $(v_1+a_1 v_{i_0}, \dots, v_{i_0-1 } + a_{i_0-1 }v_{i_0},\; v_{i_0+1 } + a_{i_0+1 }v_{i_0},\dots,
v_n+a_n v_{i_0})$ belongs to $U_{n-1}(A)$.} and 
$$ (\bs f+ \bs x \;g)\;\cdot\; \bs v^t=\bs 1.$$
\end{lemma}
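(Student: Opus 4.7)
The plan is to show $\bs F := \bs f + g\,\bs x$ equals $\bs e_1\, M$ for some product of exponential matrices $M$, thereby placing $\bs F$ in $\mathcal P(U_n(A))$. After a permutation of the coordinates (justified by the remark in Section~2 that $\bs e_i \cdot Exp\,\mathcal M_n(A) = \bs e_j \cdot Exp\,\mathcal M_n(A)$, combined with the conjugation device from the proof of Proposition~\ref{permutation} to absorb permutations of negative determinant), I may assume $i_0 = n$. Thus $\bs w_0 := (v_1 + a_1 v_n, \ldots, v_{n-1} + a_{n-1} v_n) \in U_{n-1}(A)$, and I shall build the desired exponential out of three successive column operations.

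First, I would simplify $\bs v$ by the shear $T := I_n + \sum_{j<n} a_j E_{j,n}$. Its exponent $T - I_n$ is nilpotent of order two (since $E_{j,n} E_{k,n} = 0$ for $j, k < n$), so $T = \exp(T - I_n) \in Exp\,\mathcal M_n(A)$. Setting $\bs w := (\bs w_0, v_n)$, one checks $T \bs v^t = \bs w^t$; writing $\bs F' := \bs F T^{-1}$ preserves the B\'ezout identity as $\bs F' \bs w^t = \bs 1$. Since $T \in Exp\,\mathcal M_n(A)$, it suffices to prove $\bs F' \in \mathcal P(U_n(A))$.

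Second, I would apply Theorem~\ref{extendingrows} to $\bs w$: the tuple $\bs w$ is invertible (via $\bs F'$) and reducible (taking the parameters in the reducibility definition to be zero, since $\bs w_0 \in U_{n-1}(A)$ already), so there exists $W \in Exp\,\mathcal M_n(A)$ with $\bs w W = \bs e_1$, equivalently $\bs w^t = W^{-t} \bs e_1^t$. Substituting into $\bs F' \bs w^t = \bs 1$ gives $(\bs F' W^{-t})\,\bs e_1^t = \bs 1$, so $\bs G := \bs F' W^{-t}$ is of the form $(\bs 1, G_2, \ldots, G_n)$. Third, the unipotent matrix $N := I_n - \sum_{j \geq 2} G_j E_{1,j}$ (again the exponential of a nilpotent of order two) satisfies $\bs G N = \bs e_1$. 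Unwinding,
$$\bs F \;=\; \bs F' T \;=\; \bs G\, W^t T \;=\; \bs e_1\, N^{-1} W^t T,$$
a product of three exponentials right-multiplying $\bs e_1$, which proves $\bs F \in \mathcal P(U_n(A))$.

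The main obstacle I anticipate is the initial reordering when $i_0 \neq n$. A bare transposition has determinant $-1$ and need not belong to $Exp\,\mathcal M_n(\R)$ in the real case, so one cannot simply relabel coordinates. As in Case~2 of the proof of Proposition~\ref{permutation}, the identity $S^{-1}\exp(M)S = \exp(S^{-1}MS)$ allows one to conjugate the three exponentials produced above by the required permutation $S$, staying inside $Exp\,\mathcal M_n(A)$ up to an extra exponential factor coming from an even permutation matrix. Once this bookkeeping is settled, the remainder of the proof is just the verification of nilpotency of the three elementary exponents used above.
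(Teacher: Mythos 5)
Your proof is correct and takes essentially the same route as the paper: reduce to $i_0 = n$ via Proposition~\ref{permutation}, apply Theorem~\ref{extendingrows} to produce an exponential matrix carrying the reducible $n$-tuple in a privileged position, then clear the remaining entries with a unipotent shear. The only inessential difference is your preliminary shear $T$: since $\bs v$ is itself reducible with respect to its last coordinate, the paper applies Theorem~\ref{extendingrows} directly to $\bs v$ rather than to $\bs w = T\bs v$, saving one step.
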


\begin{proof}
Suppose that $i_0\not=n$. Then we interchange the $i_0$-th coordinate with the $n$-th coordinate
in the three vectors $\bs f, \bs x$ and $\bs v$ appearing here.  The new vectors
$\tilde{\bs f}, \tilde{\bs x}$ and $\tilde{\bs v}$ still
satisfy the B\'ezout equation
$$ (\tilde{\bs f}+ \tilde{\bs x} \;g)\;\cdot \;\tilde{\bs v}^t=\bs 1.$$
Since $(\tilde{\bs f},g)$ is reducible to the principal component of $U_n(A)$ if and only if
$(\bs f,g)$ does (Proposition \ref{permutation}),  we may assume, right at the beginning,
that $\bs v$ is reducible with respect to its last coordinate.

 By  Theorem  \ref{extendingrows},
the reducibility of the row vector $\bs v$  implies the existence of a finite product  $P_1$
of exponential matrices over $A$ such that $\bs v^t$ is the first column
of a matrix $P_1\in Exp\;\mathcal M_n(A)$. Hence (as matricial products)
$$(\bs f+\bs x\; g) \;P_1= (\bs f+\bs x\; g) \, (\bs v^t | **\;*)= (\bs 1, x_2,\dots,x_{n})$$
for some $x_j\in A$.   If we let 
$$P_2=\left(\begin{matrix} \bs 1 & -x_2&\dots&-x_{n}\\ &\bs 1& &\\ &&\ddots&\\
 &&&\bs 1 \end{matrix}\right),
$$
then $P_2\in Exp\;\mathcal M_n(A)$ (because it has the form $I_n+N$, where $N$
 is nilpotent), and
 $$(\bs f+\bs x\; g) \;P_1P_2= (\bs 1, x_2,\dots,x_{n})P_2= \bs e_1.$$
Hence  $\bs f+\bs x\; g~\in~\bs e_1\;\cdot\; Exp\;\;\mathcal M_n(A)=\mathcal P(U_n(A))$.
     \end{proof}

In order to study the reducibility to the principal component, we introduce a certain equivalence relation on the set of $n$-tuples,
reminiscent of that in \cite{cs1}. Corach and Su\'arez 
considered diagonal matrices $M$ all of
whose diagonal entries were invertible elements in $A$: 
$\bs f\buildrel\sim_{a}^{CS} \bs g \ssi\bs f-\bs g \,M\; \in\;  a \,A^n$ for such a matrix $M$.
The equivalence classes of that  relation, though, do not seem to  be compatible with the connected
components of $I_n(a)$; openness for example fails.

\begin{theorem}\label{equiclass}
 Let $A$ be  a commutative unital Banach algebra over $\K$, $a\in A$, and consider the open set
  $$I_n(a):=\{\bs f\in A^n: (\bs f, a)\in U_{n+1}(A)\}.$$
Given $\bs f, \bs g\in A^n$, define
 the relation
$$\bs f\buildrel\sim_{a}^{exp} \bs g\ssi \exists\, \bs x\in A^n, \exists B_1,\dots, B_k\in \mathcal M_n(A): 
\bs f+a\;\bs x  =\bs g\; e^{B_1}\dots e^{B_k}.$$
Then
\begin{enumerate}
\item [{\rm (1)}]~~ $\buildrel\sim_{a}^{exp}$ is an equivalence relation on $A^n$.
\item [{\rm (2)}]~~ If $\bs f\in I_n(a)$, then $[\bs f]\;\ss\; I_n(a)$, where 
$$[\bs f]:=\{\bs h\in A^n: \bs h\buildrel\sim_{a}^{exp} \bs f\}$$
~~~is the equivalence class associated with $\bs f$.
\item [{\rm (3)}]~~  If $\bs f\in I_n(a)$, then
\begin{enumerate}
\item [{\rm (3i)}]~~ $[\bs f]$ is open in $A^n$,
\item [{\rm (3ii)}]~~ $[\bs f]$ is a closed-open subset of $I_n(a)$,
\item [{\rm (3iii)}] ~~ $[\bs f]$ is a (path)-connected set within $I_n(a)$.\\
\end{enumerate}
\item [{\rm (4)}] ~~The connected components of $I_n(a)$ are the equivalence classes $[\bs f]$,
where ${\bs f\in I_n(A)}$.
\end{enumerate}

\end{theorem}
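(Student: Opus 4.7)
\medskip

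\noindent\textbf{Proof plan.} The statement has four parts, but parts (1) and (2) are essentially bookkeeping, (4) is a formal consequence of (3), and the only substantive geometric content lies in (3), where one must leverage Theorem~\ref{appro} (in particular its addendum, which allows us to fix the coordinate $a$ while applying the approximation result). I first outline the algebraic parts, then explain how to pass to a topological statement.

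\medskip

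\noindent\textbf{Parts (1) and (2).} Reflexivity of $\buildrel\sim_{a}^{exp}$ is clear by taking $\bs x=0$ and $B_1=0$. For symmetry, if $\bs f+a\bs x=\bs g\cdot e^{B_1}\cdots e^{B_k}=:\bs g\, E$, then right-multiplying by $E^{-1}=e^{-B_k}\cdots e^{-B_1}$ yields $\bs g=\bs f\, E^{-1}+a(-\bs x\, E^{-1})$, which rearranges to $\bs g+a(\bs x\, E^{-1})=\bs f\, E^{-1}$. Transitivity amounts to concatenating two finite products of exponentials and summing the shift vectors after right-multiplying appropriately by an exponential product. For part (2), given $\bs f\in I_n(a)$ pick $\bs u\in A^n$, $v\in A$ with $\bs u\bs f^{\,t}+va=\bs 1$. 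If $\bs h+a\bs x=\bs f\, E$, then $\bs f^{\,t}=(E^{-1})^t(\bs h+a\bs x)^t$, and substituting gives
\[
\bs u(E^{-1})^t\,\bs h^{\,t}\;+\;a\bigl(\bs u(E^{-1})^t\bs x^{\,t}+v\bigr)=\bs 1,
\]
so $(\bs h,a)\in U_{n+1}(A)$.

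\medskip

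\noindent\textbf{Part (3): the main step.} This is where I expect the only real work, and the key tool is the addendum in Theorem~\ref{appro} applied to the invertible $(n+1)$-tuple $(\bs h,a)$ rather than to $\bs h$ itself (which need not belong to $U_n(A)$). Let $\bs h\in[\bs f]$, so $\bs h\in I_n(a)$ by (2). Choose $\varepsilon>0$ from Theorem~\ref{appro} applied to $(\bs h,a)$. For any $\bs h'\in A^n$ with $\sum_j\|h_j-h_j'\|<\varepsilon$, the tuple $(\bs h',a)$ agrees with $(\bs h,a)$ in its last coordinate, so by the addendum there exists $M\in\mathcal M_n(A)$ and a column vector $w\in A^n$ with
\[
\begin{pmatrix}\bs h'^{\,t}\\ a\end{pmatrix}=\begin{pmatrix} e^M & w\\ \bs 0 & \bs 1\end{pmatrix}\begin{pmatrix}\bs h^{\,t}\\ a\end{pmatrix},
\]
hence $\bs h'=\bs h\cdot e^{M^{\,t}}+a\,w^{\,t}$, i.e.\ $\bs h'\buildrel\sim_{a}^{exp}\bs h$. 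Combining with $\bs h\buildrel\sim_{a}^{exp}\bs f$ via transitivity gives $\bs h'\in[\bs f]$, which proves (3i). The same perturbation argument, applied this time to a sequence $\bs h_k\in[\bs f]$ with $\bs h_k\to\bs h\in I_n(a)$, yields $\bs h_k\buildrel\sim_{a}^{exp}\bs h$ for large $k$, hence $\bs h\in[\bs f]$, establishing relative closedness in $I_n(a)$. Path-connectedness (3iii) is direct: if $\bs h+a\bs x=\bs f\cdot e^{B_1}\cdots e^{B_k}$, then
\[
\gamma(t):=\bs f\cdot e^{tB_1}\cdots e^{tB_k}-t\,a\,\bs x,\qquad t\in[0,1],
\]
defines a continuous path in $A^n$ with $\gamma(0)=\bs f$, $\gamma(1)=\bs h$, and $\gamma(t)+t\,a\,\bs x=\bs f\cdot e^{tB_1}\cdots e^{tB_k}$ shows $\gamma(t)\in[\bs f]\subseteq I_n(a)$.

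\medskip

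\noindent\textbf{Part (4).} Since $[\bs f]$ is, by (3), a nonempty, connected, open-closed subset of $I_n(a)$ containing $\bs f$, it must coincide with the connected component of $\bs f$ in $I_n(a)$. The hardest point conceptually is the one already flagged: one cannot use Theorem~\ref{appro} on $\bs f$ directly (it is not assumed invertible in $A^n$), so the whole argument must be routed through the $(n+1)$-tuple $(\bs f,a)$ and the addendum that keeps the coordinate $a$ rigid under perturbation.
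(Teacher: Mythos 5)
Your proposal is correct and follows essentially the same route as the paper: parts (1) and (2) are the same routine manipulations, and the crucial step in (3) is, as you flag, to apply Theorem~\ref{appro} together with its addendum to the $(n+1)$-tuple $(\bs h,a)$ (fixing the last coordinate $a$) rather than to $\bs h$ itself, which is exactly what the paper does. The remaining differences (e.g.\ in (2) you substitute into the B\'ezout identity for $(\bs f,a)$, whereas the paper directly exhibits $\tilde{\bs f}+a(\tilde{\bs x}+\bs x E)=(\bs f+a\bs x)E\in U_n(A)$) are cosmetic.
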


\begin{proof}
(1) $\bullet~~\buildrel\sim_{a}^{exp}$ is reflexive: just take $\bs x=\bs 0$ and $B_j=O$.

$\bullet~~\buildrel\sim_{a}^{exp}$ is symmetric: if $\bs f+a\;\bs x  =\bs g\; e^{B_1}\dots e^{B_k}$,
then
$$\bs g-a \big(  \bs x \;e^{-B_k}\dots e^{-B_1}\bigr)=\bs f\; e^{-B_k}\dots e^{-B_1}.$$

$\bullet~~\buildrel\sim_{a}^{exp}$ is transitive (here we use  that in the definition of the relation
$\buildrel\sim_{a}^{exp}$ products of exponential matrices appear; a single exponential matrix 
would not be sufficient):  let
$\bs f_1\buildrel\sim_{a}^{exp} \bs f_2$  and $\bs f_2\buildrel\sim_{a}^{exp}\bs f_3$, then there exist $\bs x_j\in A^n$ and 
$E_j\;\in\; Exp\;\;\mathcal M_n(A)$ such that
\begin{eqnarray*} \bs f_1+a\; \bs x_1 &=& \bs f_2 \,E_1=(\bs f_3 \, E_2-a\; \bs x_2) E_1.
\end{eqnarray*} 
Then
$$\bs f_1 + a (\bs x_1+\bs x_2E_1) = \bs f_3 \; E_2 E_1.$$
Hence  $\bs f_1\buildrel\sim_{a}^{exp} \bs f_3$.

(2) Let $\bs f\in I_n(a)$. Then there is $\bs x\in A^n$ such that $\bs f + a\; \bs x\in U_n(A)$.
Now if $\bs {\tilde f}\in [\bs f]$ then,   %by Observation \ref{inv-op},
$$\bs {\tilde f}+a\, \bs{\tilde x}= \bs {f}\, E $$
for some $E\;\in \; Exp\;\;\mathcal M_n(A)$.  Hence
$$\bs {\tilde f}+a\, (\bs{\tilde x}+ \bs x E)= (\bs f + a\;\bs x)E \in U_n(A),$$
from which we conclude that
$(\bs{\tilde  f}, a)\in U_{n+1}(A)$. In other words, $\bs{\tilde f}\in I_n(a)$. Thus $[\bs f]\ss I_n(a)$.

(3i) To show the openness of $[\bs f]$ whenever $\bs f\in I_n(a)$,    
 let $\bs h\in [\bs f]$.  By (2), $(\bs h, a)\in U_{n+1}(A)$.
We claim  that there is $\e>0$ such
that every $\bs h'\in A^n$ with $||\bs h'-\bs h||<\e$ is equivalent to $\bs f$. To see this, choose
according to Theorem \ref{appro},  $\e>0$  so small
that $(\bs h',a)=(\bs h, a) \,e^H$ for some $H\in \mathcal M_{n+1}(A)$. 
 In particular $\bs h'\in I_n(a)$.
By that same Theorem,
$H$ may be chosen so that the last column of $H$  is zero and that 
 $$e^H=\left(\begin{matrix} e^{K} &{\bs 0}^t_{n}\\ \bs x & {\bs 1}\end{matrix}\right)$$
for some $K\in \mathcal M_n(A)$ and $\bs x\in A^n$. Since
$$(\bs h',a)=(\bs h, a)\; \left(\begin{matrix} e^{K} &{\bs 0}^t_{n}\\ \bs x &
 {\bs 1}\end{matrix}\right),$$
we conclude that 
$$\bs h'= \bs h \, e^{K}+ a\,\bs x.$$
In other words, $\bs h' \in [\bs h]=[\bs f]$. Hence $[\bs f]$ is open in $A^n$. 

(3ii)  Let $(\bs h_j)$  be a sequence in $[\bs f]\ss I_n(a)$ converging to some $\bs h'\in I_n(a)$. 
As in the previous paragraph, if $n$ is sufficently large, we may conclude that 
$\bs h'\in [\bs h_j]=[\bs f]$ for $j\geq j_0$. Hence $[\bs f]$ is (relatively) closed in $I_n(a)$.
Furthermore, since $[\bs f]\ss I_n(a)$, we deduce from (3i)  that $[\bs f]$ is also open in $I_n(a)$.
 
 (3iii) Let $\bs {\tilde f}\in [\bs f]$; say   $\bs {\tilde f}+a\, \bs x =\bs f\, e^{M_1}\cdots e^{M_k}$ 
 for some $\bs x\in A^n$ and $M_j\in \mathcal M_n(A)$. Then the map $H:[0,1]\to A^n$ given by
 $$H(t)= \bs f  \;e^{tM_1}\cdots  e^{tM_k}- ta \; \bs x $$
 is a continuous path  joining $\bs f$ with $\bs {\tilde f}$.  By definition of $\buildrel\sim_{a}^{exp}$,
 each $H(t)$ is equivalent to $\bs f$; that is $H(t)\in [\bs f]$.  Thus $[\bs f]$ is path connected.\\
 
 (4) This follows immediately from (3i)-(3iii).
     \end{proof}

Here is the counterpart to Lemma \ref{clopenreduc}.
 
   \begin{theorem} \label{clopenredu-exp}
   Let $A$ be  a commutative unital Banach algebra over $\K$. Then, for $g\in A$, the set
    \begin{eqnarray*}
R^{exp}_n(g)&:=&\mbox{$\{\bs f\in A^n: (\bs f, g)$ is reducible to the principal component of $U_n(A) \}$}\\ &=& g\; A^n  +  \mathcal P(U_n(A))
\end{eqnarray*}
    is open-closed inside $I_n(g)$. In particular, if $\bs F: [0,1]\to I_n(g)$ is a continuous map
    for which  $(\bs F(0),g)$ is reducible to the principal component, 
then $(\bs F(1),g)$ is reducible to the principal component, too.
\end{theorem}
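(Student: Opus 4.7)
The plan is to read off everything from the analysis of the equivalence relation $\buildrel\sim_g^{exp}$ already carried out in Theorem \ref{equiclass}. The key observation is that the set $R^{exp}_n(g)$ is exactly one of the equivalence classes $[\bs f]$, namely the class of $\bs e_1$.

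First I would dispatch the identity $R^{exp}_n(g) = gA^n + \mathcal{P}(U_n(A))$ by unwinding definitions: $\bs f \in R^{exp}_n(g)$ means there exists $\bs h \in A^n$ with $\bs f + g\bs h \in \mathcal{P}(U_n(A))$, i.e. $\bs f = \bs p - g\bs h$ for some $\bs p \in \mathcal{P}(U_n(A))$, and conversely. This is immediate. I would also note that $R^{exp}_n(g) \subseteq I_n(g)$: if $\bs y\cdot(\bs f+g\bs h)^t = \bs 1$ then $\bs y \cdot \bs f^t + (\bs y\cdot\bs h^t)g = \bs 1$, so $(\bs f,g)\in U_{n+1}(A)$.

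Next, I would verify that $\bs e_1 \in I_n(g)$ (trivially, since $\bs 1 \cdot \bs 1 + 0 \cdot g = \bs 1$), and then compute the equivalence class of $\bs e_1$ under the relation from Theorem \ref{equiclass}. By the definition of that relation,
\[
\bs f \,\buildrel\sim_g^{exp}\, \bs e_1 \iff \exists\,\bs x\in A^n,\ \exists B_1,\dots,B_k\in\mathcal M_n(A):\ \bs f + g\bs x = \bs e_1\, e^{B_1}\cdots e^{B_k},
\]
and the right-hand side ranges precisely over $\bs e_1\cdot Exp\,\mathcal M_n(A) = \mathcal P(U_n(A))$. Hence
\[
[\bs e_1] \;=\; \{\bs f\in A^n: \bs f + g\bs x\in\mathcal P(U_n(A))\text{ for some }\bs x\in A^n\} \;=\; R^{exp}_n(g).
\]

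Having made this identification, the open-closedness of $R^{exp}_n(g)$ in $I_n(g)$ is immediate from Theorem \ref{equiclass}(3ii), which says every equivalence class $[\bs f]$ with $\bs f\in I_n(g)$ is open-closed in $I_n(g)$. For the last assertion, if $\bs F:[0,1]\to I_n(g)$ is continuous and $\bs F(0)\in R^{exp}_n(g)$, then $\bs F([0,1])$ is a connected subset of $I_n(g)$ meeting the connected component $R^{exp}_n(g)=[\bs e_1]$ (Theorem \ref{equiclass}(4)), hence is entirely contained in it; in particular $\bs F(1)\in R^{exp}_n(g)$.

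There is no real obstacle: all the heavy lifting was absorbed into Theorem \ref{equiclass}, so the only thing to watch is the bookkeeping in the identification $R^{exp}_n(g)=[\bs e_1]$, and the minor check that reducibility to the principal component forces invertibility of the original $(n+1)$-tuple so that the inclusion $R^{exp}_n(g)\subseteq I_n(g)$ is well-defined.
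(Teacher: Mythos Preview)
Your proof is correct and follows essentially the same approach as the paper: identify $R^{exp}_n(g)$ with the equivalence class $[\bs e_1]$ and then invoke Theorem~\ref{equiclass} for the clopen and connectedness properties. You are slightly more explicit in justifying the equality $R^{exp}_n(g)=gA^n+\mathcal P(U_n(A))$ and the inclusion $R^{exp}_n(g)\subseteq I_n(g)$, but the argument is the same.
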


\begin{proof}
We first note that, by definition, $R^{exp}_n(g)\ss I_n(g)$ and that  the reducibility  of $(\bs f,g)\in U_{n+1}(A)$  to the principal component of $U_n(A)$ is equivalent to the assertion
that $\bs f\buildrel\sim_{g}^{exp} \bs e_1$. Thus 
\begin{equation}\label{rexp}
\bs f\in R^{exp}_n(g)\ssi \bs f\in [\bs e_1]\ssi [\bs f]=[\bs e_1].
\end{equation}
In other words, $R^{exp}_n(g)=[\bs e_1]$. The assertion then follows from Theorem \ref{equiclass}.

 Now if $\bs F:[0,1]\to I_n(g)$ is a curve in $I_n(g)$, then $C:=\bs F([0,1])$ is  connected. Since
  $\bs F(0)\in R^{exp}_n(g)$, we deduce  that
   $C\ss R^{exp}_n(g)$.
      \end{proof}

The following corollaries  are immediate (the second one is originally due to Corach and  Su\'arez \cite{cs1}).

\begin{corollary}
Let $A$ be  a commutative unital Banach algebra over $\K$ and $g\in A$. Then the following assertions are equivalent:
\begin{enumerate}
\item [(1)] ~~ Every invertible $(n+1)$-tuple $(\bs f, g)\in U_{n+1}(A)$ is reducible to the principal
component of $U_n(A)$; that is $I_n(g)= R_n^{exp}(g)$.
\item [(2)]~~  $I_n(g)$ is connected.

\end{enumerate}
\end{corollary}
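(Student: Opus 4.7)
The plan is to read the corollary as an almost immediate consequence of Theorem~\ref{clopenredu-exp} together with Theorem~\ref{equiclass}(3iii), the only observation needed being that $R_n^{exp}(g)$ is nonempty. Throughout, I would work with the identification $R_n^{exp}(g)=[\bs e_1]$ coming from \eqref{rexp}.

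First I would record the trivial fact that $\bs e_1\in R_n^{exp}(g)$: since $\bs e_1\in U_n(A)$, the pair $(\bs e_1,g)$ lies in $U_{n+1}(A)$, and $\bs e_1=\bs e_1+g\cdot\bs 0\in\mathcal P(U_n(A))$, so $\bs e_1\in R_n^{exp}(g)\subseteq I_n(g)$. In particular $R_n^{exp}(g)$ is a nonempty subset of $I_n(g)$, and by Theorem~\ref{clopenredu-exp} it is relatively open-closed there.

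For the direction (1)$\Rightarrow$(2), assume $I_n(g)=R_n^{exp}(g)$. By \eqref{rexp} we have $R_n^{exp}(g)=[\bs e_1]$, and Theorem~\ref{equiclass}(3iii) tells us that the equivalence class $[\bs e_1]$ is (path-)connected. Hence $I_n(g)$ is connected.

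For the converse (2)$\Rightarrow$(1), assume $I_n(g)$ is connected. The subset $R_n^{exp}(g)$ is nonempty (it contains $\bs e_1$) and, by Theorem~\ref{clopenredu-exp}, is open-closed in $I_n(g)$. Connectedness of $I_n(g)$ forces $R_n^{exp}(g)=I_n(g)$, which is precisely statement~(1). There is essentially no hard step here; the only subtle point worth flagging is that the availability of the canonical base-point $\bs e_1$ inside $R_n^{exp}(g)$ is what allows the open-closed dichotomy of Theorem~\ref{clopenredu-exp} to be converted into a genuine equality.
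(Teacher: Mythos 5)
Your proposal is correct and follows essentially the same route as the paper: both rest on the identification $R_n^{exp}(g)=[\bs e_1]$ from \eqref{rexp} together with Theorem~\ref{equiclass}, which makes $[\bs e_1]$ a (path-)connected, open-closed subset of $I_n(g)$ containing $\bs e_1$. Your explicit verification that $\bs e_1\in R_n^{exp}(g)$ and your split into the two implications merely flesh out what the paper leaves implicit; there is no substantive difference.
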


\begin{proof}
Just note that by equation \zit{rexp},  $R^{exp}_n(g)=[\bs e_1]$ and that $[\bs e_1]$
is a connected set which is contained in $I_n(g)$ for every $g\in A$. The result now follows from
Theorem \ref{equiclass}.
     \end{proof}
\begin{corollary} \label{rning}
Let $A$ be  a commutative unital Banach algebra over $\K$ and $g\in A$. Then the following assertions are equivalent:
\begin{enumerate}
\item [(1)]~~  $I_n(g)=R_n(g)$;
\item [(2)]~~  Each component of $I_n(g)$ meets $U_n(A)$.
\end{enumerate}
\end{corollary}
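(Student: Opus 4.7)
The plan is to view this as a purely topological consequence of Lemma \ref{clopenreduc}. Recall from that lemma that $R_n(g) = gA^n + U_n(A)$ and that $R_n(g)$ is an open-closed subset of $I_n(g)$. Also, the trivial inclusion $U_n(A) \subseteq R_n(g)$ holds: if $\bs f \in U_n(A)$, then taking $\bs a = \bs 0$ shows $\bs f = \bs f + g\,\bs 0 \in U_n(A)$, and moreover $(\bs f, g) \in U_{n+1}(A)$ since any B\'ezout identity for $\bs f$ extends trivially with coefficient $0$ on $g$. So $U_n(A) \subseteq R_n(g) \subseteq I_n(g)$.

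For (2)$\Rightarrow$(1), I would argue as follows. Let $C$ be any connected component of $I_n(g)$. By hypothesis, $C \cap U_n(A) \neq \emptyset$, hence $C \cap R_n(g) \neq \emptyset$. Since $R_n(g)$ is open-closed in $I_n(g)$ by Lemma \ref{clopenreduc}, the set $C \cap R_n(g)$ is open-closed in $C$, and nonempty; by connectedness of $C$, it must equal $C$. Therefore $C \subseteq R_n(g)$, and since this holds for every component, $I_n(g) \subseteq R_n(g)$. The reverse inclusion is immediate from the definition of reducibility, so $I_n(g) = R_n(g)$.

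For (1)$\Rightarrow$(2), fix $\bs f \in I_n(g)$. Since $I_n(g) = R_n(g)$, there exists $\bs a \in A^n$ with $\bs f + g\,\bs a \in U_n(A)$. The obvious candidate for a path from $\bs f$ to a point of $U_n(A)$ is the affine segment
$$\bs F(t) := \bs f + t\,g\,\bs a, \qquad t \in [0,1].$$
This is continuous, with $\bs F(0) = \bs f$ and $\bs F(1) = \bs f + g\,\bs a \in U_n(A)$. I need to verify that $\bs F(t) \in I_n(g)$ for every $t \in [0,1]$, i.e.\ that $(\bs F(t), g) \in U_{n+1}(A)$. Since $(\bs f, g) \in U_{n+1}(A)$, pick $(\bs x, y) \in A^{n+1}$ with $\bs f \cdot \bs x^t + g\,y = \bs 1$. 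Then
$$(\bs f + t g \bs a) \cdot \bs x^t + g\bigl(y - t\,\bs a\cdot \bs x^t\bigr) = \bs 1,$$
which confirms $(\bs F(t), g) \in U_{n+1}(A)$ for every $t$. Thus $\bs F$ is a path in $I_n(g)$ connecting $\bs f$ to a point of $U_n(A)$, so the component of $\bs f$ in $I_n(g)$ meets $U_n(A)$.

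There is no real obstacle here: once Lemma \ref{clopenreduc} is available, the equivalence reduces to the standard principle that a clopen subset contains every connected component it meets, together with the easy observation that the linear homotopy $\bs f \mapsto \bs f + tg\bs a$ stays inside $I_n(g)$. The only small point requiring attention is writing down the explicit B\'ezout identity that propagates along the segment, which I have done above.
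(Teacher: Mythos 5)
Your proof is correct, and it takes a genuinely different route from the paper's. The paper derives Corollary~\ref{rning} from Theorem~\ref{equiclass}: it identifies the connected components of $I_n(g)$ as the equivalence classes of the relation $\sim_g^{exp}$, and then unwinds the chain of equivalences $[\bs f]\cap U_n(A)\neq\emptyset \Leftrightarrow \exists\,\bs u\in U_n(A)$ with $\bs u\sim_g^{exp}\bs f \Leftrightarrow \bs f+g\bs x=\bs u\,e^{B_1}\cdots e^{B_k} \Leftrightarrow (\bs f,g)$ reducible. Your argument bypasses the equivalence-relation machinery entirely and relies only on Lemma~\ref{clopenreduc}: for (2)$\Rightarrow$(1), the clopenness of $R_n(g)$ inside $I_n(g)$ forces any component meeting $R_n(g)$ to lie inside it; for (1)$\Rightarrow$(2), the affine segment $\bs F(t)=\bs f+tg\bs a$ with the explicit B\'ezout identity $(\bs f+tg\bs a)\cdot\bs x^t + g(y-t\,\bs a\cdot\bs x^t)=\bs 1$ keeps the path inside $I_n(g)$ and lands it in $U_n(A)$. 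Your approach is more elementary and self-contained (it doesn't need Theorem~\ref{equiclass} at all), while the paper's approach slots the corollary into its broader structural framework of $\sim_g^{exp}$-classes; both, as you've set them up, actually establish the stronger pointwise statement noted after the paper's proof, namely that an individual $\bs f\in I_n(g)$ is reducible with $g$ if and only if its connected component meets $U_n(A)$.
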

\begin{proof}
Since the connected components of $I_n(g)$ are the equivalence classes $[\bs f]$ for 
$\buildrel\sim_{g}^{exp}$ with $\bs f\in I_n(g)$ (Theorem \ref{equiclass}),  we have the following equivalent assertions for a given $\bs f\in I_n(g)$:
\begin{enumerate}
\item [(i)]~~ $[\bs f]\inter U_n(A)\not=\emp$,
\item [(ii)]~~  there exists $\bs u\in U_n(A)$ such that $\bs u\buildrel\sim_{g}^{exp}\bs f$,
\item [(iii)]~~ there exists $\bs u\in U_n(A)$, $\bs x\in A^n$, and $B_1,\dots, B_k\in \mathcal M_n(A)$ such that
$$\bs f+g\, \bs x =\bs u \;e^{B_1}\cdots e^{B_k},$$
\item [(iv)]~~ $(\bs f,g)$ is reducible.
\end{enumerate}
     \end{proof}
Note that we actually proved a stronger  result than stated, because  the  assertions (i)-(iv) 
are valid for each individual $\bs f$.\\

The following two Lemmas are very useful to check examples upon reducibility.  They roughly say
that the reducibility of $({\bs f},g)$ depends only on the behaviour of the Gelfand transforms 
of the coordinates $f_j$ of $\bs f$ on the zero set of $\widehat g$.
We use the following notation:  $\widehat {\bs f}:=(\widehat f_1,\dots,\widehat f_n)$.

\begin{lemma}\label{reduaufE}
Let  $A$ be a commutative unital Banach algebra over $\C$. Suppose that $(\bs f,g)\in U_{n+1}(A)$.
Let $E:=Z(\widehat g)$.
If for some $\bs u\in U_n(A)$ and matrices $M_j\in \mathcal M_n(A)$
$$\sup_{x\in E}\big| \widehat {\bs f}(x) - \widehat {\bs u}(x) \;\exp \widehat M_1(x)\cdots 
\exp \widehat M_m(x)\big|<\e,$$
where $\e$ is sufficiently small,  then $(\bs f,g)$ is reducible in $A$.
\end{lemma}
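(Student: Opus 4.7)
The plan is to transfer the problem to the algebra $B:=C(M(A))$ via the Gelfand transform and then combine the closed-open property of Lemma \ref{clopenreduc} with the Corach--Su\'arez Theorem \ref{redu-cx=a}. First I would set
$$\widetilde{\bs u}:=\widehat{\bs u}\cdot \exp\widehat M_1\cdots\exp\widehat M_m,$$
interpreting the product as a row vector in $B^n$ multiplied by an invertible product of $n\times n$-matrices over $B$. Since $\bs u\in U_n(A)$, its Gelfand transform $\widehat{\bs u}$ lies in $U_n(B)$, and right-multiplication by the invertible matrix $\exp\widehat M_1\cdots\exp\widehat M_m$ preserves this, so $\widetilde{\bs u}\in U_n(B)$. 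Because $M(A)$ is compact and $B=C(M(A))$, being in $U_n(B)$ is equivalent to the pointwise Euclidean lower bound $|\widetilde{\bs u}(x)|\ge \delta$ for some $\delta>0$ and all $x\in M(A)$.

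Next I would choose $\varepsilon<\delta$ to give a precise meaning to ``sufficiently small'' in the hypothesis, and consider the straight-line homotopy
$$\bs F:[0,1]\longrightarrow B^n,\qquad \bs F(t):=(1-t)\,\widehat{\bs f}+t\,\widetilde{\bs u}.$$
I claim $\bs F(t)\in I_n(\widehat g)$ for every $t$. At points $x\in M(A)\setminus E$ this is immediate from $\widehat g(x)\ne 0$. At points $x\in E=Z(\widehat g)$ the hypothesis gives $|\widehat{\bs f}(x)-\widetilde{\bs u}(x)|<\varepsilon$, whence
$$|\bs F(t)(x)|\ge |\widetilde{\bs u}(x)|-(1-t)\,|\widehat{\bs f}(x)-\widetilde{\bs u}(x)|\ge \delta-\varepsilon>0.$$
Consequently $(\bs F(t),\widehat g)$ has no common zero on $M(A)$, so it lies in $U_{n+1}(B)$, i.e.\ $\bs F(t)\in I_n(\widehat g)$ for all $t\in[0,1]$.

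Since $\bs F(1)=\widetilde{\bs u}\in U_n(B)$, the pair $(\bs F(1),\widehat g)$ is trivially reducible in $B$ (take the zero reducer). Applying Lemma \ref{clopenreduc} to the algebra $B$ along the path $\bs F$ then yields that the other endpoint $(\bs F(0),\widehat g)=(\widehat{\bs f},\widehat g)$ is reducible in $B=C(M(A))$. A final appeal to the Corach--Su\'arez Theorem \ref{redu-cx=a} lifts this reducibility back to the original algebra $A$, giving the claim.

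The only substantive technical point I expect to require care is the uniform positive lower bound for $|\widetilde{\bs u}|$ on the compact space $M(A)$; once this is in place, the rest is a clean linear-homotopy argument combined with the two cited results, and the value $\varepsilon=\delta/2$ (say) makes precise what ``sufficiently small'' means.
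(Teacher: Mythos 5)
Your proof is correct, but it takes an unnecessary detour through $C(M(A))$ that the paper avoids. You define $\widetilde{\bs u}:=\widehat{\bs u}\cdot\exp\widehat M_1\cdots\exp\widehat M_m$, treat it as an element of $C(M(A))^n$, run the linear homotopy in $C(M(A))$, apply Lemma \ref{clopenreduc} there, and then invoke Corach--Su\'arez (Theorem \ref{redu-cx=a}) to pull the reducibility back to $A$. The paper instead observes that $\bs b:=\bs u\cdot\exp M_1\cdots\exp M_m$ is already an honest element of $U_n(A)$ (since $\bs u\in A^n$ and each $\exp M_j\in\mathcal M_n(A)^{-1}$), so one can form the path $\psi(t)=(1-t)\bs f+t\,\bs b$ directly in $A^n$, check via Gelfand transforms restricted to $E=Z(\widehat g)$ that $(\psi(t),g)\in U_{n+1}(A)$ for all $t$, and then apply Lemma \ref{clopenreduc} inside $A$ to transport reducibility from $\psi(1)=\bs b\in U_n(A)$ to $\psi(0)=\bs f$. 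This is shorter and does not invoke Theorem \ref{redu-cx=a} at all; in fact this lemma is used as an ingredient \emph{toward} the paper's analogue of Corach--Su\'arez (Theorem \ref{expreduvector}), so keeping it independent of Theorem \ref{redu-cx=a} is the cleaner architecture.

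One small point worth tightening in your write-up: you fix $\delta$ as a global lower bound for $|\widetilde{\bs u}|$ on all of $M(A)$, which makes your threshold for ``sufficiently small'' depend on the auxiliary data $\bs u$ and $M_j$. The more natural choice (and the paper's) is $\delta:=\min_{x\in E}|\widehat{\bs f}(x)|$, which depends only on $(\bs f,g)$; taking $\e\le\delta/2$ and estimating $|\psi(t)|\ge|\widehat{\bs f}|-t\,|\widehat{\bs f}-\widehat{\bs b}|\ge\delta/2$ on $E$ gives a threshold intrinsic to the given pair, which is the intended reading of ``$\e$ sufficiently small''.
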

\begin{proof}
Let $\delta:=\min\{ |\widehat{\bs f}(x)| \colon x\in E\}$. Since $(\bs f, g)\in U_n(A)$, we have  $\delta>0$.
Fix $\e\in \;]0,\delta/2]$, and  let $\bs b:= \bs u\; \exp M_1\cdots \exp M_m\in U_n(A)$ 
 be chosen so that
$$\sup_{x\in E}|\widehat {\bs f}(x)-  \widehat {\bs b}(x)|<\e.$$
Consider the path $\psi:[0,1]\to A^n$  given by
$$\psi(t)= (1-t)\,\bs f+ t\,\bs b.$$
On $E$ we then have the following estimates: 
\begin{eqnarray*}
\big|(1-t)\widehat{\bs f }+t\,\widehat{\bs b}\big |&=& \big|t(\widehat{\bs b}-\widehat{\bs f})+
\widehat{\bs f} \big|  \\
&\geq& |\widehat{\bs f}| -t \,|\widehat{\bs b}-\widehat{\bs f}|\\
&\geq& \delta-\delta/2=\delta/2.
\end{eqnarray*}
Hence the tuples $(\psi (t),g)$  are invertible in $A$ for every $t$. Since for $t=1$, $\psi(1)=\bs b\in U_n(A)$,  the tuple $(\psi(1),g)$ is reducible in $A$.
By Lemma \ref{clopenreduc}, $(\psi(0),g)$ then is  reducible which 
in turn implies  the reducibility of
 $(\bs f,g)$.
     \end{proof}

  \begin{lemma} \label{redu-exp-aufE}
 Let  $A$ be a commutative unital Banach algebra over $\C$. Suppose that $(\bs f,g)\in U_{n+1}(A)$.
Let $E:=Z(\widehat g)$.
If for some matrices $M_j\in \mathcal M_n(A)$
$$\sup_{x\in E}\big| \widehat {\bs f} (x) - \bs e_1\cdot \;\exp \widehat M_1(x)\cdots 
\exp \widehat M_m(x)\big|<\e,$$
where $\e$ is sufficiently small,  then $(\bs f,g)$ is reducible to the principal component 
$\mathcal P(U_n(A))$ of $U_n(A)$.
 \end{lemma}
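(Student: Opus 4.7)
\medskip

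\noindent\textbf{Proof plan for Lemma \ref{redu-exp-aufE}.}

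The plan is to mimic the argument of Lemma \ref{reduaufE}, but replace the reference to Lemma \ref{clopenreduc} by its exponential analogue, Theorem \ref{clopenredu-exp}, and choose the comparison tuple so that it visibly lies in $\mathcal{P}(U_n(A))$. Put
$$\bs b := \bs e_1\cdot\exp M_1\cdots\exp M_m.$$
By the very definition of the principal component, $\bs b\in \bs e_1\cdot Exp\,\mathcal{M}_n(A)=\mathcal{P}(U_n(A))$; in particular $\bs b\in U_n(A)$, and $(\bs b,g)$ is trivially reducible to the principal component (take $\bs h=\bs 0$).

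Next I would link $\bs f$ to $\bs b$ by the straight-line path
$$\psi:[0,1]\to A^n,\qquad \psi(t)=(1-t)\bs f+t\,\bs b,$$
and show that $\psi(t)\in I_n(g)$ for every $t\in[0,1]$. For this it suffices, by Gelfand theory, to check that $\widehat{\psi(t)}$ and $\widehat g$ have no common zero on $M(A)$. Off $E=Z(\widehat g)$ nothing is to prove. On $E$, set $\delta:=\min\{|\widehat{\bs f}(x)|:x\in E\}$; since $(\bs f,g)\in U_{n+1}(A)$ and $E$ is compact with $\widehat g\equiv0$ on $E$, we have $\delta>0$. Shrinking $\e$ so that $\e\le\delta/2$, the hypothesis gives $\sup_{x\in E}|\widehat{\bs f}(x)-\widehat{\bs b}(x)|<\e$, hence on $E$
$$|(1-t)\widehat{\bs f}+t\widehat{\bs b}|=|\widehat{\bs f}+t(\widehat{\bs b}-\widehat{\bs f})|\ge|\widehat{\bs f}|-t|\widehat{\bs b}-\widehat{\bs f}|\ge\delta-\e\ge\delta/2>0.$$
So $\psi(t)\in I_n(g)$ for all $t$.

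Finally, since $\psi(1)=\bs b\in\mathcal{P}(U_n(A))$, the pair $(\psi(1),g)$ is reducible to the principal component; Theorem \ref{clopenredu-exp} then propagates this property along the path $\psi$, yielding that $(\psi(0),g)=(\bs f,g)$ is reducible to $\mathcal{P}(U_n(A))$, as desired.

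The only point that could be considered an obstacle is the positivity of $\delta$, i.e.\ making sure $\widehat{\bs f}$ does not vanish on $E$. This is automatic from $(\bs f,g)\in U_{n+1}(A)$ together with compactness of $E\subseteq M(A)$, so the argument goes through without further difficulty; everything else is a straightforward linear homotopy estimate followed by an application of Theorem \ref{clopenredu-exp}.
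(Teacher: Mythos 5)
Your proposal is correct and matches the paper's proof almost verbatim: the same straight-line homotopy from $\bs f$ to $\bs b=\bs e_1\cdot\exp M_1\cdots\exp M_m$, the same $\delta/2$-estimate on $E=Z(\widehat g)$ to keep the path inside $I_n(g)$, and the same final appeal to Theorem~\ref{clopenredu-exp} to transport reducibility to the principal component from $\psi(1)$ back to $\psi(0)=\bs f$. The only difference is that you spell out the Gelfand-theoretic verification that $\psi(t)\in I_n(g)$ a bit more explicitly, which the paper leaves implicit.
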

 \begin{proof}
  Consider the path $\psi:[0,1]\to A^n$  given by
$$\psi(t)= (1-t)\,\bs f+ t\,\bs b,$$
where $ \bs b:=\bs e_1\;\cdot\;\exp M_1\cdots \exp M_m$. If 
$$0<\e<(1/2)\min\{ |\widehat{\bs f}(x)|\colon  x\in Z(\widehat g)\},$$
then $(\psi(t),g)\in U_{n+1}(A)$ for every $t\in [0,1]$.
Now  $(\psi(1),g)=(\bs b, g)$ is reducible to the principal component of $U_n(A)$ since
 $$\bs b+ 0\cdot g=\bs e_1\; \cdot \exp M_1\cdots\exp M_m\in \mathcal P(U_n(A)).$$
Hence, by   Theorem \ref{clopenredu-exp},  $(\psi(0), g)=(\bs f,g)$ is reducible to the principal component of $U_n(A)$, too.
     \end{proof}
 
We close  this   section with  our main theorem,  which is the  analogue to
 the Corach-Su\'arez result  Theorem \ref{redu-cx=a} (\cite{cs}).
It is based on the Arens-Novodvorski-Taylor theorem (\cite{ar1},\cite{no}, \cite{tay1}), a version of which
we recall here.

\begin{theorem}[Arens-Novodvorski-Taylor]\label{arnt}
 Let $A$ be a commutative unital complex Banach algebra, $X:=M(A)$ its spectrum
  and $\mathcal M_n(A)$ the Banach algebra
 of $n\times n$ matrices over $A$.
 \begin{enumerate}
 \item [(1i)]~~ Suppose that for some  $M\in \mathcal M_n(A)$ there is 
 $\underline M\in \mathcal M_n(C(X))$ such that
 ${\widehat M=\exp{\underline M}}$. \footnote{ Here $\widehat M$ is the matrix whose entries
 are the Gelfand transforms of the entries of the matrix $M$.} 
 Then $M=\exp{L_1}\cdots \exp{L_m}$ 
 for some $L_j\in \mathcal M_n(A)$.
 \item [(1ii)]~~ Let $M\in \mathcal M_n(A)^{-1}$. If $\widehat M$ belongs to the principal component of $\mathcal M_n(C(X))^{-1}$, then $M$ already belongs to the principal component  of $\mathcal M_n(A)^{-1}$.
\item  [(2)]~~ Let $\bs f\in U_n(C(X))$.  Then there exist
$\bs g\in U_n(A)$ and $\underline G_1,\dots, \underline G_m\in \mathcal M_n(C(X))$ such that  
$\bs f =\bs {\widehat g}\;\exp{\underline G_1}\cdots \exp{\underline G_m}$.
\item [(3)]~~  Let  $\bs u$ and $\bs v$ be in $U_n(A)$. Suppose that there are matrices 
$\underline G_j\in \mathcal M_n(C(X))$ such that
 $\widehat{\bs u} =\widehat{\bs v}\; \exp{\underline G_1}\cdots \exp{\underline G_m}$. Then 
$\bs u$ and $\bs v$  belong to the same connected component of $U_n(A)$.
\item [(4)]~~  The Gelfand transform induces a group isomorphism between  the quotient 
groups  
$$\mbox{$\dis \mathcal M_n(A)^{-1}/Exp\;\mathcal M_n(A)$ and 
$\mathcal M_n(C(X))^{-1}/ Exp \;\mathcal M_n(C(X))$.}$$

\end{enumerate}
\end{theorem}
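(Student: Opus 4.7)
The plan is to derive parts (1i), (1ii), (3), and (4) as consequences of the core approximation statement (2), which is the technical heart of the theorem. This is the classical organization going back to Arens and Novodvorski, later refined via Taylor's multivariable holomorphic functional calculus.

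For part (2), I would proceed by a local-to-global patching argument. Since $X=M(A)$ is compact and $U_n(C(X))$ is open in $C(X)^n$, one can cover $X$ with finitely many open sets $V_1,\dots,V_N$ on each of which there is a local lift $\bs g_k\in A^n$ with $\widehat{\bs g_k}|_{V_k}$ sufficiently close to $\bs f|_{V_k}$ to invoke Theorem \ref{appro} transplanted to $C(V_k)$, yielding a local factorization $\bs f|_{V_k}=\widehat{\bs g_k}\,\exp \underline E_k$ for some $\underline E_k\in \mathcal M_n(C(V_k))$. The main obstacle is gluing these local trivializations into one global expression of the desired form; this is where either Taylor's joint-spectrum machinery or a nerve argument over the cover is required, keeping track of the cocycle of transition exponentials. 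Once (2) is secured, one obtains a global $\bs g\in U_n(A)$ together with finitely many exponential factors in $\mathcal M_n(C(X))$ accounting for the discrepancy.

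Parts (1i) and (1ii) are complementary: (1i) is the special case of (1ii) corresponding to the assumption that $\widehat M$ is a single exponential and hence lies in the principal component. For (1ii), writing $\widehat M=\exp \underline M_1\cdots \exp \underline M_k$, I would apply the matrix-valued analogue of (2) to the rows of $M$ (exploiting that each row of $M$ is an invertible tuple since $M$ is invertible), obtaining at each step a correction by the Gelfand transform of an exponential matrix over $A$. Assembling these corrections yields $M\in Exp\,\mathcal M_n(A)$. For part (3), given $\widehat{\bs u}=\widehat{\bs v}\cdot P$ with $P$ a finite product of exponentials in $\mathcal M_n(C(X))$, apply (1i) to lift $P$ to a matrix $Q\in Exp\,\mathcal M_n(A)$ with $\bs u=\bs v\cdot Q$; Theorem \ref{compo} then places $\bs u$ and $\bs v$ in the same connected component of $U_n(A)$.

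For part (4), the Gelfand transform induces a well-defined homomorphism between the quotient groups by functoriality. Surjectivity is the content of (2), upgraded from tuples to square matrices by applying (2) to each row and reassembling. Injectivity follows from (1ii): if $\widehat M$ is trivial in the $C(X)$-quotient, meaning $\widehat M\in Exp\,\mathcal M_n(C(X))$, then $M\in Exp\,\mathcal M_n(A)$ and so is trivial in the $A$-quotient. The hard part throughout is the patching step inside (2); once that is in place, the remaining statements are formal consequences of the connected-component description provided by Theorem \ref{compo}. In the actual paper we cite \cite{ar1}, \cite{no}, \cite{tay1} for the technical verification of (2).
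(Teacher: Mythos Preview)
The paper does not prove Theorem~\ref{arnt}; it simply quotes the statement and cites \cite{ar1}, \cite{no}, \cite{tay1} for the proof. So there is no proof in the paper to compare against, and your closing remark that the paper defers to those references is accurate.

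That said, your internal derivation of (3) from (1i) has a genuine gap. You write: ``apply (1i) to lift $P$ to a matrix $Q\in Exp\,\mathcal M_n(A)$ with $\bs u=\bs v\cdot Q$''. But (1i) and (1ii) take as \emph{input} a matrix $M\in\mathcal M_n(A)$ whose Gelfand transform lies in the principal component; they do not produce an $A$-matrix from an arbitrary $C(X)$-matrix. The product $P=\exp\underline G_1\cdots\exp\underline G_m$ is only a matrix over $C(X)$, and since the Gelfand transform $\mathcal M_n(A)\to\mathcal M_n(C(X))$ is generally not surjective, there need not exist any $Q\in\mathcal M_n(A)$ with $\widehat Q=P$. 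Even if such a $Q$ existed, the equality $\widehat{\bs u}=\widehat{\bs v}\,\widehat Q$ gives $\bs u=\bs v\,Q$ only when $A$ is semisimple. Thus (3) is not a formal corollary of (1i); in the cited literature (3) is obtained by an independent argument (Novodvorski's homotopy method or Taylor's functional calculus), not by lifting the $C(X)$-transition matrix. A related difficulty affects your route to (1ii) and to surjectivity in (4): applying (2) to each row of $M$ produces, for row $k$, some $\bs g_k\in U_n(A)$ and exponential factors in $C(X)$, but there is no evident way to assemble the independently chosen $\bs g_k$ into a single invertible matrix over $A$ while controlling the residual $C(X)$-factors.
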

Item (2), in particular,   says that every connected component of $U_n(C(X))$
contains an  element of the form $\widehat{\bs f}:=(\widehat f_1,\dots, \widehat f_n)$,
 where $\bs f\in U_n(A)$. Moreover, 
 (3) is equivalent to the assertion that if $\widehat{\bs u}$ and $\widehat{\bs v}$ can be joined
 by  a path in $U_n(C(X))$, then $\bs u$ and $\bs v$ can be joined by  a path in $U_n(A)$.
 Item (4) also   says that every element  in $\mathcal M_n(C(X))^{-1}$ is homotopic
 in  $\mathcal M_n(C(X))^{-1}$ to $\widehat M$  for some $M\in \mathcal M_n(A)^{-1}$.

\begin{theorem}\label{expreduvector}
Let $A$ be  a commutative unital Banach algebra over $\C$. Given an invertible tuple
$(\bs f, g)\in U_{n+1}(A)$, the following assertions are equivalent:
\begin{enumerate}
\item [(1)] ~~$(\bs f,g)$ is reducible to the principal component of $U_n(A)$;
\item [(2)] ~~ $\widehat{\bs f}|_{Z(\widehat g)}$ belongs to the principal component of 
$U_n(C(Z(\widehat g)))$.
\item [(3)] ~~ $(\widehat{\bs f},\widehat g)$ is reducible to the principal component of 
$U_n(C(M(A)))$.
\end{enumerate}
\end{theorem}
\begin{proof}
(1) $\imp$ (2)~~ Let $E:=Z(\widehat g)$. By assumption there is $\bs h\in U_n(A)$ such that
$$\bs u:=\bs f+g\;\bs h\in \mathcal P(U_n(A)).$$
That is, there are matrices $M_j\in \mathcal M_n(A)$ such that
$$\bs u=\bs e_1  \cdot \exp M_1\cdots \exp M_m.$$
 If we apply the Gelfand transform and restrict to $Z(\widehat  g)$, then
$$\widehat{\bs f}|_{E}=\bs e_1\cdot \big(  \exp \widehat M_1\cdots \exp \widehat M_m\big)|_{E}.$$
Hence
$\widehat{\bs f}|_{E}\in \mathcal P(U_n(C(E)))$.

(2) $\imp$ (1)~~ By assumption,  there exist  matrices $C_j\in \mathcal M_n(C(E))$ 
such that 
$$\widehat{\bs f}|_{E}=\bs e_1\cdot \exp C_1\cdots \exp C_k.$$
Let $A_E=\ov{\widehat A|_E}^{^{||\cdot||_\infty}}$ be the uniform closure of  the restriction
algebra $\widehat A|_E$ in $C(E)$. Since $Z(g)$ is $A$-convex, $M(A_E)=E$  (\cite{gam}).
Because $\widehat{\bs f}|_E\in (A_E)^n$ belongs to the principal component 
$\mathcal P(U_n(C(E)))$ of $U_n(C(E))$, which is ``generated'' by $\bs e_1$,  we conclude from 
 the Arens-Novodvorski-Taylor Theorem \ref{arnt}(3)  that $\widehat{\bs f}|_E$ belongs to the same
 component of $U_n(A_E)$ as $\bs e_1$; namely the principal component 
 $\mathcal P(U_n(A_E))$
  of $U_n(A_E)$. Hence there are matrices $B_j\in \mathcal M_n(A_E)$ such that
  $$\widehat{\bs f}|_E= \bs e_1\cdot\exp B_1\cdots \exp B_m.$$
Now, we uniformly approximate on $E$ the matrices $B_j$ by 
matrices $\widehat M_j$ with $M_j\in \mathcal M_n(A)$; say
$$\sup_{x\in E} \Big|  \widehat{\bs f}(x) - \bs e_1\;\cdot \exp \widehat M_1(x)\cdots \exp \widehat M_m(x)\Big|<\e
 $$
By Lemma \ref{redu-exp-aufE}, $(\bs f, g)$ is reducible to the principal component 
$\mathcal P(U_n(A))$ of $U_n(A)$.

(2) $\imp $ (3)  follows from  Lemma \ref{redu-exp-aufE} and (3) $\imp$ (2)
is clear.
     \end{proof}

If $n=2$, then the previous result reads as follows:
\begin{corollary}
Let $A$ be  a commutative unital Banach algebra over $\C$. Given an invertible pair
$(f, g)\in U_{2}(A)$, the following assertions are equivalent:
\begin{enumerate}
\item [(1)] ~~ There exist $a,h \in A$ such that $f+a g=e^h$;
\item [(2)] ~~ $\widehat{f}|_{Z(\widehat g)}=e^v$ for some $v\in C(Z(\widehat g))$.
\end{enumerate}
\end{corollary}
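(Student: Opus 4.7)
The plan is simply to specialize Theorem \ref{expreduvector} to the case $n=1$ and translate the language of principal components into the scalar language of exponentials; the corollary contains no new content beyond this unwinding.

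First I would record that for $n=1$ one has $U_1(A)=A^{-1}$ and $\mathcal M_1(A)=A$, so that the principal component of $U_1(A)$ reduces to
$$\mathcal P(U_1(A))=\bs 1 \cdot Exp\,\mathcal M_1(A)=\exp A=\{e^h:h\in A\},$$
exactly as noted in the remark following the definition of $\mathcal P(U_n(A))$. The same identification applied to the uniform algebra $C(Z(\widehat g))$ gives $\mathcal P(U_1(C(Z(\widehat g))))=\exp C(Z(\widehat g))$.

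Next I would unwind condition (1) of Theorem \ref{expreduvector} for $n=1$: reducibility of $(f,g)\in U_2(A)$ to $\mathcal P(U_1(A))$ means, by definition, the existence of some $a\in A$ with $f+ag\in \mathcal P(U_1(A))=\exp A$, i.e.\ $f+ag=e^h$ for some $h\in A$. This is precisely (1) of the corollary. Analogously, condition (2) of Theorem \ref{expreduvector} asserts that $\widehat f|_{Z(\widehat g)}$ lies in $\mathcal P(U_1(C(Z(\widehat g))))=\exp C(Z(\widehat g))$, i.e.\ that $\widehat f|_{Z(\widehat g)}=e^v$ for some continuous function $v$ on $Z(\widehat g)$, which is (2) of the corollary.

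Having matched the two conditions of the corollary with the $n=1$ instances of conditions (1) and (2) of Theorem \ref{expreduvector}, the equivalence follows at once. There is no genuine obstacle here; the only ``work'' is the observation that in dimension one the principal component of the invertible group of a commutative unital Banach algebra over $\C$ is exactly the image of the exponential map, which is immediate from the definition.
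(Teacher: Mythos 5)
Your proof is correct and coincides exactly with what the paper intends: the corollary is stated as a direct specialization of Theorem~\ref{expreduvector} (the paper gives no separate argument), and your unwinding of $\mathcal P(U_1(A))=\exp A$ and of conditions (1)--(2) for $n=1$ supplies precisely the routine translation. (Note that the paper's introductory phrase ``If $n=2$'' is evidently a slip for $n=1$, since the tuple $(f,g)$ lies in $U_2(A)=U_{n+1}(A)$; you have used the correct value.)
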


%%%%%%%%%%%%%%%%%%%%%%%%%%%

\section{Reducibility in $C(X,\K)$ with $X\ss\K^n$}

In this section we study the reducibility in $C(X,\K)$ with $X\ss\K^n$ in detail.

\begin{definition}\label{holecondi}\begin{enumerate}

\item [a)]~~ Let $K\ss \R^n$ be compact. A bounded connected component of 
$\R^n\setminus K$ is called a {\sl hole} of $K$.

\item [b)]~~ Let $K,L$ be two compact sets in $\R^n$ with $K\ss L$.  The pair $(K,L)$
is said to satisfy the {\sl hole condition} if every hole of $K$ contains a hole of $L$.
\end{enumerate}
\end{definition}

The following concepts were introduced in $\C$ by the second author in \cite{rup}.

\begin{definition}\label{bdc}
Let $K\ss \R^n$ be compact and $g\in C(K,\K)$. Then $g$ is said to satisfy the {\it boundary principle}
\index{boundary principle} if for every nonvoid open set $G$ in $\R^n$ with $G\ss K$
 the following condition holds:
$$(B_1)~~~ \mbox{If $g\equiv 0$ on $\partial G$, then $g\equiv 0 $ on $G$}.$$
\end{definition}

\begin{proposition}
Condition $(B_1)$ is equivalent to the following assertion:
$$ (B_2)~~~ \mbox{If $G$ is an open set in $\R^n$ such that $G\ss K\setminus Z(g)$, then there exists}$$
$$ \mbox{$x_0\in \partial G$ such that $g(x_0)\not=0$}.$$
\end{proposition}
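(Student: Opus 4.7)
The two conditions are essentially contrapositives of each other, so the plan is to prove each implication by a direct logical manipulation, the only subtlety being to handle the slightly stronger hypothesis in $(B_2)$ (that $G\subseteq K\setminus Z(g)$, i.e.\ $g$ vanishes nowhere on $G$, rather than just $g\not\equiv 0$ on $G$).

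For the direction $(B_1)\Rightarrow (B_2)$ I would simply note that if $G$ is a (nonempty) open set with $G\subseteq K\setminus Z(g)$, then in particular $g$ is not identically zero on $G$. The contrapositive of $(B_1)$ then immediately yields a point $x_0\in\partial G$ with $g(x_0)\ne 0$. This direction is one line.

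For the direction $(B_2)\Rightarrow (B_1)$, which is the substantive part, I would argue by contradiction. Assume $G\subseteq K$ is open with $g\equiv 0$ on $\partial G$, and suppose some $x_1\in G$ has $g(x_1)\ne 0$. Set
$$G':= G\setminus Z(g)= G\cap\{x\in\R^n: g(x)\ne 0\}.$$
Then $G'$ is open (by continuity of $g$), nonempty (it contains $x_1$), and $G'\subseteq K\setminus Z(g)$. Apply $(B_2)$ to obtain $x_0\in\partial G'$ with $g(x_0)\ne 0$.

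The core of the argument is then showing that every point of $\partial G'$ is actually a zero of $g$, which contradicts $g(x_0)\ne 0$. Since $G'$ is open, $x_0\notin G'$, so either $x_0\notin G$ or $x_0\in G\cap Z(g)$. In the latter case $g(x_0)=0$ directly. In the former case, from $x_0\in\overline{G'}\subseteq\overline{G}$ and $x_0\notin G$ we get $x_0\in\partial G$, so $g(x_0)=0$ by hypothesis. Either way we reach a contradiction, which forces $g\equiv 0$ on $G$.

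I do not anticipate any real obstacle; the only point where one has to be careful is the inclusion $\partial G'\subseteq \partial G\cup (G\cap Z(g))$, which rests on the elementary observation that a boundary point of the open set $G'$ either lies in the ambient open set $G$ (and is then necessarily in $Z(g)$) or lies outside $G$ but in $\overline{G}$, hence in $\partial G$.
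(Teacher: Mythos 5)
Your proof is correct and follows essentially the same route as the paper: for $(B_1)\Rightarrow(B_2)$ you argue by contraposition, and for $(B_2)\Rightarrow(B_1)$ you introduce the set $G'=G\setminus Z(g)$ (which the paper calls $U$), show it satisfies the hypothesis of $(B_2)$, and derive a contradiction by proving $\partial G'\subseteq Z(g)$. The only cosmetic difference is that you verify $\partial G'\subseteq Z(g)$ by a two-case analysis on a boundary point, whereas the paper records the same fact as the inclusion chain $\partial U=\overline{U}\setminus U\subseteq\overline{G}\setminus U\subseteq(G\setminus U)\cup\partial G\subseteq Z(g)$.
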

\begin{proof}
Assume that $g$ satisfies $(B_1)$ and let $G\ss K\setminus Z(g)$ be open. Then $g$ cannot
vanish identically on $\partial G$ since otherwise $(B_1)$ would imply that $g\equiv 0$ on $G$.
A contradiction to the assumption that $G\inter Z(g)=\emp$. Hence $g$ satisfies $(B_2)$.

Conversely, let $g$ satisfy $(B_2)$. Suppose, to the contrary, that $g$ does not satisfy condition
$(B_1)$.  Then there is an open set $G$ in $\R^n$ with $G\ss K$,   $g\equiv 0$ on $\partial G$, but
such that $g$ does not vanish identically on $G$. Hence  
$$U:=\{x\in G: g(x)\not=0\}$$
is an open, nonvoid set in $\R^n$ which is contained in $K\setminus Z(g)$. 
But $\partial U\ss Z(g)$,
because
$$\partial U= \ov U\setminus U\ss \ov G\setminus U\ss(G\setminus U)\union \partial G\ss Z(g).$$
This contradicts condition $(B_2)$ for $U$.  Hence such a set $G$ cannot exist and we deduce
that $g$ has property $(B_1)$.
     \end{proof}

The following result gives an interesting connection between the hole condition (Definition \ref{holecondi}) and the boundary principle (Definition \ref{bdc}).

\begin{theorem}\label{hc=bp}
Let $K\ss\R^n$ be compact and $g\in C(K,\K)$. The following assertions are equivalent:
\begin{enumerate}
\item [(1)]~~ $g$ satisfies the boundary principle $(B_1)$.
\item [(2)]~~ $(Z(g), K)$ satisfies the hole condition; that is, every hole of $Z(g)$ contains
a hole of $K$ \footnote{ or which is the same, no hole of $Z(g)$ is entirely contained in $K$.}. 
\end{enumerate}
\end{theorem}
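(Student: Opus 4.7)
The plan is to handle the two implications separately, in each case exploiting the elementary fact that if $H$ is a connected component of $\R^n \setminus Z(g)$, then $\partial H \subseteq Z(g)$ (since $Z(g)$ is closed and $H$ is a maximal connected subset of its complement).

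\textbf{For $(1)\Rightarrow(2)$:} Let $H$ be a hole of $Z(g)$, i.e.\ a bounded connected component of $\R^n \setminus Z(g)$. I would split into two cases. If $H \subseteq K$, then $H$ is an open subset of $\R^n$ contained in $K$, with $\partial H \subseteq Z(g)$, hence $g\equiv 0$ on $\partial H$. Condition $(B_1)$ then forces $g\equiv 0$ on $H$, contradicting $H \cap Z(g) = \emptyset$. So we must have $H \not\subseteq K$, and I pick $x \in H \setminus K$. The component $C$ of $\R^n \setminus K$ containing $x$ lies in $\R^n \setminus Z(g)$ (because $Z(g)\subseteq K$), so $C$ is a connected subset of $\R^n \setminus Z(g)$ that meets $H$; by maximality $C \subseteq H$. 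Since $H$ is bounded, so is $C$, so $C$ is a hole of $K$ contained in $H$.

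\textbf{For $(2)\Rightarrow(1)$:} Let $G$ be open in $\R^n$ with $G\subseteq K$ and $g\equiv 0$ on $\partial G$. Assume for contradiction that there is $x_0\in G$ with $g(x_0)\neq 0$, and let $W$ be the connected component of $\R^n\setminus Z(g)$ that contains $x_0$. Using $\partial G \subseteq Z(g)$, I would write the disjoint decomposition
\[
\R^n\setminus \partial G = G \,\sqcup\, (\R^n\setminus \ov G),
\]
which contains $\R^n\setminus Z(g) \supseteq W$. Since $G$ and $\R^n\setminus \ov G$ are open and disjoint, and $W$ is connected with $x_0\in W\cap G$, connectedness forces $W\subseteq G$. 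Thus $W\subseteq G\subseteq K$, so $W$ is bounded and is therefore a hole of $Z(g)$. By hypothesis $(2)$, $W$ must contain a hole of $K$, hence a point of $\R^n\setminus K$; but $W\subseteq K$, contradiction. Hence $g\equiv 0$ on $G$, establishing $(B_1)$.

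The main (and only real) obstacle is making sure the basic topological facts are invoked cleanly: namely that $\partial H\subseteq Z(g)$ for a component of the complement of the closed set $Z(g)$, and that the connected set $W$ cannot straddle $\partial G$ because $\partial G$ has been absorbed into $Z(g)$. Once these are in place, the two implications are essentially two lines each, and the statement reduces to a transparent interplay between components of $\R^n\setminus Z(g)$ and components of $\R^n\setminus K$.
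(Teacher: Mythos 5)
Your proof is correct and follows essentially the same approach as the paper's, with one small but pleasant difference in the direction $(2)\Rightarrow(1)$. The paper verifies the equivalent condition $(B_2)$ and, to show that a component $\Omega$ of $G$ coincides with the component $C$ of $\R^n\setminus Z(g)$ containing it, appeals to a path in $C$ that would have to cross $\partial\Omega$ (implicitly using that open connected subsets of $\R^n$ are path-connected). You instead verify $(B_1)$ directly: having observed $\partial G\subseteq Z(g)$, you use the partition $\R^n\setminus\partial G = G\sqcup(\R^n\setminus\ov G)$ into two disjoint open sets to conclude that the connected set $W\subseteq\R^n\setminus Z(g)$ cannot straddle $\partial G$ and hence lies in $G$. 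This replaces the path-connectedness argument by a purely point-set one, which is marginally more elementary; the rest of your argument (including the derivation of the boundedness of $W$ from compactness of $K$, and the case split in $(1)\Rightarrow(2)$ deducing $H\not\subseteq K$ and then locating a hole of $K$ inside $H$) matches the paper's reasoning and the footnote's reformulation of the hole condition.
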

\begin{proof}
(1) $\imp$ (2)~~Suppose that there is a component $\Omega$ of $\R^n\setminus Z(g)$ with $\Omega\ss K$. 
Then $\Omega$ is open in $\R^n$ and $\partial \Omega\ss Z(g)$. 
 Condition $(B_1)$ now implies that
$g\equiv 0$ on $\Omega$ (note that by assumption $\Omega\ss K$). Hence $\Omega\ss Z(g)$. A contradiction.

(2) $\imp$ (1)~~ We show that the equivalent condition $(B_2)$ is satisfied.  
So let $G$ be open in $\R^n$ and assume
that   $G\ss K\setminus Z(g)$. Let $\Omega$ be a component of $G$. Then $\Omega$ is  bounded 
and open in $\R^n$.
 In view of achieving a contradiction, suppose   that $g\equiv 0$ on  $\partial \Omega$. Then $\partial \Omega\ss Z(g)$.
 Since $\Omega \inter Z(g)=\emp$,  we deduce  that %$\Omega$ is a hole of $Z(g)$.
 $ \Omega$ belongs to a connected component $C$ of $\R^n\setminus Z(g)$. 
 If  $\Omega$ is a proper subset of $C$, then a path connecting $z_0 \in \Omega$ and 
 $w \in C\setminus \Omega$ within $C$  would pass through  a boundary point $z_1$ of  $\Omega$. But then $g(z_1)=0$, contradicting $z_1 \in C$. Hence $\Omega=C$. 
Since $\Omega$ is bounded, we conclude that $\Omega$ is a hole of $Z(g)$.
 But $\Omega\ss G\ss K$; thus $(Z(g), K)$ cannot satisfy the hole condition (2).  This is a contradiction.
Hence there is $x_0\in \partial \Omega$  such that $g(x_0)\not=0$.  
 Since $\partial \Omega\ss \partial G$ (note that $\Omega$ is supposed to be a component of $G$),
  we are done.      \end{proof}

Let us emphasize that for  $Z(g)\ss K\ss\R^n$ the pair
 $(Z(g),K)$ automatically satisfies the hole condition (and equivalently the boundary principle 
 $(B_1)$) if $K^\circ=\emp$.
 An important class of zero-sets satisfying the equivalent conditions (1) and (2) is given
in the following example:

\begin{example}\label{coconn=b1}
 Let $K\ss\R^n$ be compact and $g\in C(K,\K)$. Then 
$Z(g)$ has property $(B_1)$ if:
\begin{enumerate}
\item [(i)]~  $\R^n\setminus Z(g)$ is connected whenever $n\geq 2$;
\item [(ii)]~  $\R\setminus Z(g)$ has exactly two  components whenever $n=1$.
\end{enumerate}
\end{example}

%%%%

%%%%%
The next  two theorems give   nice geometric/topological  conditions for  reducibility of $n$-tuples, respectively for reducibility to the principal component  of $U_n(C(X,\K))$.
They generalize the corresponding facts for pairs developed by the second author in \cite{rup}
for certain planar compacta. 
\begin{theorem}\label{bprinc-multi}
Let $K\ss\K^n$ be compact and $g\in C(K,\K)$. The following assertions are equivalent:
\begin{enumerate}
\item [(1)]~ $C(K,\K)$ is $n$-stable at $g$, that is $(\bs f,g)$ 
is reducible for every $\bs f=(f_1,\dots, f_n)\in C(K,\K^n)$ such that 
$(\bs f,g)\in U_{n+1}(C(K,\K))$. 
\item [(2)]~  $\bs f|_{Z(g)}$ admits a zero-free extension to $K$  for all  $\bs f\in C(K,\K^n)$ with ${Z(\bs f)\inter Z(g)=\emp}$.
\item [(3)]~ $g$ satisfies the boundary principle $(B_1)$.
\item [(4)]~ $(Z(g),K)$ satisfies the hole condition.
\end{enumerate}
\end{theorem}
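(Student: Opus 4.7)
Since Theorem \ref{hc=bp} already gives (3) $\iff$ (4), my plan is to establish (1) $\iff$ (2) directly and then close the cycle by proving (1) $\Rightarrow$ (4) and (4) $\Rightarrow$ (2).

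The equivalence (1) $\iff$ (2) is essentially tautological in view of the Corach--Su\'arez Lemma \ref{clopenreduc}. For (1) $\Rightarrow$ (2): if $\bs F := \bs f + g\,\bs a \in U_n(C(K,\K))$ witnesses the reducibility of $(\bs f, g)$, then $\bs F$ is zero-free on $K$ and agrees with $\bs f$ on $Z(g)$, hence is the required extension. For (2) $\Rightarrow$ (1), given a zero-free extension $\bs F$ of $\bs f|_{Z(g)}$, I form the straight-line homotopy $\psi(t) := t\,\bs f + (1-t)\,\bs F$. On $Z(g)$ one has $\psi(t) \equiv \bs f|_{Z(g)}$, which is non-vanishing, while on $K \setminus Z(g)$ one has $g \neq 0$; hence $\psi:[0,1] \to I_n(g)$ is a continuous path from $\bs F$ to $\bs f$. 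Since $\bs F \in U_n(C(K,\K))$, the tuple $(\bs F, g)$ is trivially reducible (take the zero vector for $\bs a$), and Lemma \ref{clopenreduc} yields the reducibility of $(\bs f, g)$.

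For (1) $\Rightarrow$ (4) I argue by contraposition using Brouwer degree. Suppose there is a hole $\Omega$ of $Z(g)$ contained in $K$; since $\partial \Omega \ss Z(g) \ss K$, we also have $\overline \Omega \ss K$. Pick $x_0 \in \Omega$ and define $\bs f \in C(K, \K^n)$ componentwise by $\bs f(x) := x - x_0$. Then $Z(\bs f) = \{x_0\} \ss \Omega \ss K \setminus Z(g)$, so $(\bs f, g) \in U_{n+1}(C(K,\K))$. Were (1) to hold, (2) would supply a zero-free $\bs F : K \to \K^n \setminus \{0\}$ with $\bs F(x) = x - x_0$ for all $x \in Z(g)$, in particular on $\partial \Omega$. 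Identifying $\K^n$ with the real vector space $\R^{(\dim_\R \K)\,n}$ of the same dimension and invoking the homotopy invariance of the Brouwer degree (which depends only on boundary data) for maps $\overline \Omega \to \K^n$, we would obtain $0 = \deg(\bs F, \Omega, 0) = \deg(x \mapsto x - x_0, \Omega, 0) = 1$, a contradiction.

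The main obstacle is (4) $\Rightarrow$ (2): constructing a zero-free extension of $\bs f|_{Z(g)}$ under the hole hypothesis. My plan is to extend $\bs f|_{Z(g)}$ by the Tietze theorem to some $\tilde{\bs F}: K \to \K^n$ and then perturb $\tilde{\bs F}$ inside $K \setminus Z(g)$ so as to eliminate all zeros without altering the values on $Z(g)$. The compact set $Z(\tilde{\bs F}) \cap K$ lies entirely in $K \setminus Z(g)$ and splits according to the connected components of $\K^n \setminus Z(g)$. Zeros in an unbounded component are pushed out of $K$ using a Tietze-extension of $\bs f|_{Z(g)}$ to an exhausting neighborhood of the component in $\K^n$. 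For a bounded component $\Omega$ (a hole of $Z(g)$), the hole condition (4) furnishes a point $p_\Omega \in \Omega \setminus K$, and one then homotopes $\tilde{\bs F}$ on $\overline \Omega \cap K$ through maps that fix $\partial \Omega$ until the zero set is concentrated in an arbitrarily small neighborhood of $p_\Omega$, hence outside $K$. This localization step, which is the delicate part, relies on the connectivity properties of $\K^n \setminus \{0\}$ and the compactness of $Z(\tilde{\bs F}) \cap K$, and is precisely where the hole condition is essential. Patching the modifications over the (disjoint) components yields the desired zero-free extension $\bs F$.
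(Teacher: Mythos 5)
Your arguments for $(1)\Leftrightarrow(2)$, for $(2)\Rightarrow(4)$, and the appeal to Theorem~\ref{hc=bp} for $(3)\Leftrightarrow(4)$ are all correct, and they are nicer than what the paper does: the paper simply cites \cite{cs} or \cite{ru99} for $(1)\Leftrightarrow(2)$ and \cite[Theorem~5.6]{mr5} for $(2)\Leftrightarrow(4)$, whereas you give a self-contained reduction via Lemma~\ref{clopenreduc} and a clean Brouwer-degree contradiction. However, the implication $(4)\Rightarrow(2)$ --- which is really the substantive content of the theorem --- has a genuine gap that you yourself flag as ``the delicate part.'' The plan (Tietze-extend $\bs f|_{Z(g)}$, then push the zeros of $\tilde{\bs F}$ off $K$ component by component) is the right idea, but the crucial step is asserted rather than proved. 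You never explain \emph{how} one homotopes $\tilde{\bs F}$ rel $\partial\Omega$ so that its zero set collapses into a small neighborhood of $p_\Omega$, nor why zeros lying in the unbounded component of $\K^n\setminus Z(g)$ can be ``pushed out.'' Invoking ``connectivity of $\K^n\setminus\{0\}$ and compactness'' does not constitute an argument: the set $Z(\tilde{\bs F})\cap K$ may be topologically wild, the intersection $\ov\Omega\cap K$ can fail to be connected or locally connected, and a literal homotopy that transports zeros requires a concrete construction (e.g.\ a radial retraction with base point $p_\Omega$ and a controlled cut-off near $\partial\Omega$, with an explicit verification that the modified map is nonvanishing on the transition region). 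This is precisely what \cite[Theorem~5.6]{mr5} supplies and what the paper relies on by citation; as written, your proof does not replace it.

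So the route you sketch differs from the paper's (a direct topological construction instead of quoting \cite{mr5}), and three of the four implications are honestly established, but the cycle is not closed: the $(4)\Rightarrow(2)$ direction must either be carried out in full or, as in the paper, deferred to \cite[Theorem~5.6]{mr5}.
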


%The proof of  part i) is the same as before.
\begin{proof}
i) The equivalence of (1) with (2) is  well-known (see \cite{cs} or \cite{ru99}).
The equivalence of (2) with (4) is  \cite[Theorem 5.6]{mr5}, provided we identify
$(u_1+iv_1,\dots u_n+iv_n)$ in the complex-valued case with the real-valued
 $(2n)$-tuple $(u_1,v_1,\dots, u_n,v_n)$.
The equivalence of (3) with (4) is  Theorem \ref{hc=bp}.
     \end{proof}

 \begin{theorem}\label{reduprinci-vec}
 Let $K\ss\K^n$ be compact and $g\in C(K,\K)$. The following three assertions are equivalent:
 \begin{enumerate}
 \item [(1)]~  $(\bs f,g)$ is reducible to the principal component of $U_n(C(K,\K))$ %\mathscr E_{xp}(U_n(C(X,\K))$
 for every $\bs f\in C(K,\K^n)$ such that $(\bs f,g)\in U_{n+1}(C(K,\K))$;
 \item [(2)]~ there exist matrices $B_j\in \mathcal M_n(C(Z(g),\K))$ such that 
 $$\bs f|_{Z(g)} =\bs e_1\;\cdot \; e^{B_1}\cdots e^{B_k}$$
  for every $\bs f\in C(K,\K^n)$ with $Z(\bs f)\inter Z(g)\not=\emp$;
 \item [(3)]~  $Z(g)$ has no holes in $\K^n$.
\end{enumerate}
\end{theorem}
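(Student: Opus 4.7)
The plan is to split the proof into two independent stages, matching the division of the three conditions into the analytic block (1)--(2) and the geometric block (2)--(3). I would handle the first block by a direct Tietze/clopen argument that avoids invoking Arens--Novodvorski--Taylor (and hence Theorem \ref{expreduvector}), and then reduce (2)$\iff$(3) to the standard Borsuk--Hopf extension principle.

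For (1)$\iff$(2) I work directly in $A=C(K,\K)$, where $M(A)=K$ and the Gelfand transform is the identity. The implication $(1)\imp(2)$ is obtained by restricting the identity $\bs f+g\bs h=\bs e_1\cdot e^{M_1}\cdots e^{M_k}$ to $Z(g)$, on which $g\bs h\equiv 0$, and setting $B_j:=M_j|_{Z(g)}$. For $(2)\imp(1)$, extend each $B_j$ entry-wise via Tietze to some $\tilde B_j\in \mathcal M_n(C(K,\K))$ and form $\bs v:=\bs e_1\cdot e^{\tilde B_1}\cdots e^{\tilde B_k}\in \mathcal P(U_n(C(K,\K)))$. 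By hypothesis $\bs v|_{Z(g)}=\bs f|_{Z(g)}$, so the linear path $\psi(t):=(1-t)\bs f+t\bs v$ satisfies $\psi(t)|_{Z(g)}=\bs f|_{Z(g)}$ for every $t$, whence $\psi([0,1])\ss I_n(g)$. Since $(\bs v,g)$ is reducible to the principal component trivially (take $\bs h=\bs 0$), Theorem \ref{clopenredu-exp} transports this back to $(\bs f,g)=(\psi(0),g)$. This step is uniform in $\K=\R$ or $\K=\C$.

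For (2)$\iff$(3), Theorem \ref{compo} reformulates (2) as the statement that every non-vanishing restriction $\bs f|_{Z(g)}\colon Z(g)\to\K^n\setminus\{0\}$ is null-homotopic in $\K^n\setminus\{0\}$. For $(2)\imp(3)$ I argue by contrapositive: if $\Omega$ is a hole of $Z(g)$ and $x_0\in\Omega$, I take $\bs f(x):=x-x_0$; then $Z(\bs f)\inter Z(g)=\emp$, so $(\bs f,g)\in U_{n+1}$, and $\bs f|_{Z(g)}$ has Brouwer degree $1$ with respect to $x_0$, which precludes null-homotopy because the degree is locally constant in the base point on $\K^n\setminus Z(g)$ and vanishes on the unbounded component. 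For $(3)\imp(2)$ I would invoke the Borsuk--Hopf extension principle: if $Z\ss\R^d$ is compact with $\R^d\setminus Z$ connected, every continuous map $Z\to S^{d-1}$ extends continuously to $\R^d\to S^{d-1}$. Applied with $d=\dim_\R\K^n$ to $\bs f|_{Z(g)}/|\bs f|_{Z(g)}|$, this yields an extension $\tilde F\colon\K^n\to S^{d-1}$; the straight-line contraction $(x,t)\mapsto\tilde F((1-t)x+tx_0)$ in the contractible ambient $\K^n$ then provides a null-homotopy of $\bs f|_{Z(g)}$ in $\K^n\setminus\{0\}$, which is the required membership in $\mathcal P(U_n(C(Z(g),\K)))$.

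The main obstacle is the step $(3)\imp(2)$: it rests on the Borsuk--Hopf / Alexander-duality content in dimension $d-1$ and is the only genuinely topological (rather than analytic) ingredient. Rather than rederiving it here, the paper most likely imports it through the companion \cite[Thm.~5.6]{mr5} already cited for Theorem \ref{bprinc-multi}, which packages exactly the extension statement needed in a form tailored to both $\K=\R$ and $\K=\C$.
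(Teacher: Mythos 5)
Your proposal is correct and essentially matches the paper's own argument: $(1)\Rightarrow(2)$ by restricting the representation of the principal component to $Z(g)$, $(2)\Rightarrow(3)$ via the translate $\bs z-\bs a$ and a Brouwer-type obstruction, and $(3)\Rightarrow(2)$ by extending $\bs f|_{Z(g)}$ zero-freely to the contractible ambient (the paper cites \cite[Corollary 5.8]{mr5} for the extension and then uses contractibility of a large ball; you invoke the equivalent Borsuk--Hopf principle plus a straight-line contraction). The one place you diverge is $(2)\Rightarrow(1)$: the paper simply cites Lemma \ref{redu-exp-aufE}, which is an $\varepsilon$-approximation argument stated only for complex algebras, whereas you extend the $B_j$ by Tietze so that $\bs v|_{Z(g)}=\bs f|_{Z(g)}$ exactly and then push the straight-line path through the clopen set $R_n^{exp}(g)$ of Theorem \ref{clopenredu-exp}. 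This exact-match variant is a bit cleaner, works uniformly for $\K=\R$ and $\K=\C$ without appeal to the complex-only lemma, and in effect re-proves the special case of Lemma \ref{redu-exp-aufE} that is needed. (You also silently repair the typo in item (2): the stated condition $Z(\bs f)\cap Z(g)\neq\emptyset$ must of course read $Z(\bs f)\cap Z(g)=\emptyset$, which is what the paper's proof uses as well.)
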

 \begin{proof}
First  we note that (1) and (2) are equivalent in view of Theorem \ref{expreduvector}. Since we have to deal here only with the special case $C(K,\K)$, the following  simple proof is available:\\

 (1) $\imp$ (2) ~~ If $\bs f + g\; \bs h\; \in\; \mathcal P(U_n(C(K,\K)))$ for some $\bs h\in C(K,\K^n)$
  then, by using the representation of the principal component,
  $$\bs f + g\,\bs h= \bs e_1\; \cdot\; e^{B_1}\cdots e^{B_k}$$
  for some matrices $B_j\;\in\; \mathcal M_n(C(K,\K))$.   Restricting this identity
   to $Z(g)$ yields the assertion (2).
  
  (2) $\imp$ (1) ~~ This follows from  Lemma \ref{redu-exp-aufE}.

  (2) $\imp$ (3) ~~    Suppose, to the contrary, that $Z(g)$ admits a bounded complementary component ${G\ss \K^n}$. Then $\partial G\ss Z(g)$. Let $\bs a\in G$ and $\bs f (\bs z)= \bs z-\bs a$,
  $\bs z\in Z(g)$. 
  Since $(\bs f, g)\in U_{n+1}(C(Z(g),\K))$, there exist by hypothesis (2) 
  matrices $B_j\in \mathcal M_n(C(Z(g),\K))$ such that 
  $$\bs f|_{Z(g)}=\bs e_1\;\cdot \; e^{B_1}\cdots e^{B_k}.$$
  Extending via Tietze's result  the matrices $B_j$ continuously to $\K^n$,
  would yield a zero-free extension of the  $n$-tuple $(\bs z-\bs a)|_{Z(g)}$ to $\ov G$.
  This contradicts  a corollary to Brouwer's fixed point theorem (see \cite[Chapter 4]{bu}).
  
   (3) $\imp$ (2) ~~    By a standard result in vector analysis (see for example 
   \cite[Corollary 5.8] {mr5}), the connectedness of $\K^n\setminus Z(g)$ implies
   that the invertible tuple $\bs f|_{Z(g)}\in U_{n}(C(Z(g),\K))$ admits a 
   zero-free extension  $\bs F$ to $\K^n$.
Let ${\bf B}\ss\K^n$ be  a closed ball whose interior contains $X$. 
Note that $\bf B$ is a contractible Hausdorff space.
 Hence, the set $U_n(C(\bf B,\K))$ is connected.
Thus, $\bs F|_{\bf B}=\bs e_1\; \cdot \;\;e^{B_1}\cdots e^{B_k}$ for some 
matrices $B_j\in \mathcal M_n(C(\bf B,\K))$.
Restricting to  $Z(g)$ yields the assertion (2).
     \end{proof}

\section{ Reducibility in Euclidean  Banach algebras }

Let us call a complex commutative  unital Banach algebra $A$ a {\it Euclidean  Banach algebra}
if the spectrum $M(A)$ of $A$ is homeomorphic to a compact set in $\C^n$. This class
of algebras includes every finitely generated Banach algebra   over $\C$, for example
the algebras  
$$P(K)=\ov{\C[z_1,\dots,z_n]\;\big|_K}^{\;||\cdot||_K}$$ 
and certain algebras  \footnote{ We do not know whether every algebra  $A(K)$ is finitely generated.} of type
$$\mbox{$ A(K)=\{f\in C(K,\C): f$  holomorphic in $K^\circ\}$},$$
$K\ss\C^n$ compact (for example $K=\ov\D^n$ or $K=\bs B_n$, the closed unit ball in $\C^n$).
Using the Arens-Novodvorski-Taylor theorem we can now  generalize Theorems 
\ref{bprinc-multi} and \ref{reduprinci-vec} to Euclidean Banach algebras.

\begin{theorem}
Let $A$ be  a  Euclidean Banach algebra  with spectrum $X\ss\C^n$ and let $g\in A$.  Then the assertions (1)-(3), respectively (4)-(5),  are equivalent:
\begin{enumerate}
\item [(1)]~ $(\bs f, g)$ is reducible for every 
$n$-tuple $\bs f\in A^n$ with $(\bs f,g)\in U_{n+1}(A)$.
\item [(2)]~ $(Z(\widehat g),X)$ satisfies the hole condition.
\item [(3)]~ $Z(\widehat g)$ satisfies the boundary principle in $\C^n$.\\

\item [(4)]~ $(\bs f, g)$ is reducible to the principal component of $U_{n}(A)$ for every 
$n$-tuple $\bs f\in A^n$ with $(\bs f,g)\in U_{n+1}(A)$.
\item [(5)]~ $Z(\widehat g)$ has no holes in $\C^n$.
\end{enumerate}
\end{theorem}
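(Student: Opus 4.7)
Set $X := M(A)$, identified with a compact subset of $\C^n$ via the given homeomorphism, and $E := Z(\widehat g)$. The equivalence (2)$\,\iff\,$(3) is Theorem \ref{hc=bp}. For the remaining equivalences my plan is to lift the analogues already proved for $C(X,\C)$ (Theorems \ref{bprinc-multi} and \ref{reduprinci-vec}) back to $A$ via Corach--Su\'arez (Theorem \ref{redu-cx=a}) and Theorem \ref{expreduvector}.

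For the easy directions (2)$\,\Rightarrow\,$(1) and (5)$\,\Rightarrow\,$(4) I would argue as follows. Assuming the hole condition (2), Theorem \ref{bprinc-multi} applied to $C(X,\C)$ at $\widehat g$ gives that every $(\bs f_0,\widehat g)\in U_{n+1}(C(X,\C))$ is reducible in $C(X,\C)$; specializing to $\bs f_0=\widehat{\bs f}$ for an arbitrary $(\bs f,g)\in U_{n+1}(A)$ and invoking Theorem \ref{redu-cx=a} yields (1). The implication (5)$\,\Rightarrow\,$(4) is entirely parallel, using Theorem \ref{reduprinci-vec} and the (3)$\,\Rightarrow\,$(1) direction of Theorem \ref{expreduvector} in place of Corach--Su\'arez.

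The harder directions (1)$\,\Rightarrow\,$(2) and (4)$\,\Rightarrow\,$(5) I would treat contrapositively, and this is where the main work lies. Since $E$ is $A$-convex, the uniform algebra $A_E:=\overline{\widehat A|_E}\subseteq C(E,\C)$ has $M(A_E)=E$, exactly as in the proof of Theorem \ref{expreduvector}. The central tool I need is the following lifting lemma: \emph{for every $\bs w\in U_n(C(E,\C))$ there exists $\bs h\in A^n$ with $(\bs h,g)\in U_{n+1}(A)$ and $\widehat{\bs h}|_E$ in the same connected component of $U_n(C(E,\C))$ as $\bs w$}. To establish it I would first apply Arens--Novodvorski--Taylor Theorem \ref{arnt}(2) to $A_E$ to write $\bs w=\bs u\cdot\exp\underline G_1\cdots\exp\underline G_m$ with $\bs u\in U_n(A_E)$, placing $\bs w$ and $\bs u$ in the same component of $U_n(C(E,\C))$, and then uniformly approximate $\bs u$ by $\widehat{\bs h}|_E$ for some $\bs h\in A^n$; a sufficiently close approximation stays in that component. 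Invertibility of $(\bs h,g)$ in $A$ then follows from the Gelfand criterion, as $|\widehat{\bs h}|$ is bounded below on $E$ and $|\widehat g|$ is bounded below on $X\setminus E$.

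With the lemma the hard directions fall out quickly. For (4)$\,\Rightarrow\,$(5): assuming $Z(\widehat g)$ has a hole in $\C^n$, Theorem \ref{reduprinci-vec} applied to $C(X,\C)$ together with Theorem \ref{expreduvector}((1)$\,\iff\,$(2)) produces $\bs f_0\in C(X,\C)^n$ with $\bs f_0|_E\notin\mathcal P(U_n(C(E,\C)))$; the lifting lemma then supplies $\bs h\in A^n$ whose restriction $\widehat{\bs h}|_E$ lies in the same component, hence outside $\mathcal P(U_n(C(E,\C)))$, and Theorem \ref{expreduvector} gives that $(\bs h,g)$ is not reducible to the principal component of $U_n(A)$, contradicting (4). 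For (1)$\,\Rightarrow\,$(2): I would first observe that reducibility of $(\bs f_0,\widehat g)$ in $C(X,\C)$ is equivalent to $\bs f_0|_E$ admitting a zero-free extension to $X$ (the non-trivial direction follows from Lemma \ref{clopenreduc} applied along the segment $(1-t)\bs f_0+t\bs v$, which stays inside $I_n(\widehat g)$ because $\bs f_0$ and the extension $\bs v$ agree on $E$), and that this extension property depends only on the component of $\bs f_0|_E$ in $U_n(C(E,\C))$ (by Tietze-extending the exponential factorization relating two tuples in the same component). The lifting lemma then transfers a non-extending witness from $C(X,\C)$ to some $\widehat{\bs h}$, and Corach--Su\'arez transports non-reducibility back to $A$. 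The main obstacle is precisely the lifting lemma, where Arens--Novodvorski--Taylor and uniform approximation in $A_E$ do the essential work; the rest is bookkeeping.
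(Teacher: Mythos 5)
Your argument is correct and uses the same underlying machinery as the paper (Arens--Novodvorski--Taylor applied to $A_E=\overline{\widehat A|_E}$, which has $M(A_E)=E$ by $A$-convexity, followed by uniform approximation of $U_n(A_E)$-elements by restrictions $\widehat{\bs h}|_E$ with $\bs h\in A^n$), but the logical organization differs. The paper proves the hard implications (1)$\Rightarrow$(2) and (4)$\Rightarrow$(5) directly: under (1) (resp.\ (4)) it shows \emph{every} $(\bs f_0,\widehat g)\in U_{n+1}(C(X,\C))$ is reducible (resp.\ reducible to the principal component) and then quotes Theorem~\ref{bprinc-multi} (resp.\ Theorem~\ref{reduprinci-vec}); the technical finish is the quantitative perturbation estimate on $E$ via Lemma~\ref{reduaufE} (resp.\ Lemma~\ref{redu-exp-aufE}) together with a Hilbert--Schmidt bound $\varepsilon/\|M\|_{HS}$. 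You instead run the hard directions contrapositively, isolating a clean lifting lemma that moves any component of $U_n(C(E,\C))$ into $\widehat A^n|_E$ while keeping $(\bs h,g)$ invertible, and then appealing to the component-invariance of zero-free extendability (resp.\ of membership in $\mathcal P(U_n(C(E,\C)))$) instead of the quantitative lemmas. Both routes succeed with comparable effort; yours is more modular and avoids the explicit $\varepsilon$-bookkeeping, while the paper's is a bit more self-contained since it only ever manipulates estimates on $E$. One small imprecision to fix: for $(\bs h,g)\in U_{n+1}(A)$ you do not need $|\widehat g|$ to be bounded below on $X\setminus E$ (this can fail because $X\setminus E$ is generally not closed); it suffices that $Z(\widehat{\bs h})\cap E=\emptyset$, which your uniform approximation does guarantee, since then $Z(\widehat{\bs h})\cap Z(\widehat g)=\emptyset$ on $X=M(A)$.
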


%%%%
\begin{proof}

 First we note that by Theorem \ref{hc=bp}, (2) and (3) are equivalent. 
 
(1)   $\imp$ (2)~~ \bl{(resp. (4)   $\imp$ (5)~~)}
  By Theorem \ref{bprinc-multi} \bl{(resp. Theorem \ref{reduprinci-vec})}
 we need to show that for every 
$\bs f\in C(X,\C^n)$ with $Z({\bs f}) \inter Z(\widehat g)=\emp$, the $(n+1)$-tuple 
$(\bs f,\widehat g)$  is reducible in $C(X,\C)$ \bl{(resp. reducible to 
$\mathcal P(U_n(C(X,\C)))$\;)}. Let $E:=Z(\widehat g)$. Consider the algebra
$B:=\ov{\widehat A|_E}^{\;||\cdot||_E}$, that is the uniform closure of the restriction algebra 
$\widehat A|_E$ to $E$. Since $E$ is the zero set of the Gelfand transform of a function in $A$, 
$E$ is $A$-convex and so the spectrum, $M(B)$, of $B$ coincides with $E$ (see \cite{gam}). 
Note that $\bs f|_E\in U_n(C(E,\C))$.
By the Arens-Novodvorski-Taylor Theorem \ref{arnt} (2), there exist
$\bs h\in U_n(B)$ and $\underline G_1,\dots, \underline G_m\in \mathcal M_n(C(E,\C))$ such that  
$$\bs f|_E =\bs h\;\exp{\underline G_1}\cdots \exp{\underline G_m}.$$
In particular $|\bs h|\geq \delta>0$ on $E$. By Tietze's extension theorem, we may assume
that $\underline G_j\in \mathcal M_n(C(X,\C))$.
%Now $|\bs v M|\leq|\bs v|\cdot ||M||_{HS}$ for a vector $\bs v$ and a matrix $M$ \footnote{Here
%$||M||_{HS}$ is the Hilbert-Schmidt norm.}. 
Using the  definition of $B$,  choose  $\bs a\in A^n$ so that   \footnote{Here
$||M||_{HS}$ is the Hilbert-Schmidt norm.}

\begin{equation}\label{klein}
\sup_{x\in E}| \bs h(x)- \bs{\widehat a}(x)|<\frac{\e} {||M||_{HS}},
\end{equation}
 where  $M:= \exp{\underline G_1}\cdots \exp{\underline G_m}$ and where
$\e$ is so small that $Z(\bs a)\inter Z(\widehat g)= Z(\bs a)\inter E=\emp$. Thus 
$(\bs a,g)\in U_{n+1}(A)$.  The hypothesis (1) \bl{(resp. (4)}
implies that $(\bs a,g)$ is reducible in $A$ \bl{(resp. reducible to the principal component of $U_n(A)$)}.
That is, there is  $\bs x\in A^n$ such that  $\bs u:=\bs a+ \bs x \, g\in U_n(A)$ \bl{(resp. 
$\bs u\;\in \;\mathcal P(U_n(A))$)}.  In particular,
$\widehat{\bs u}=\widehat{\bs a}$ on $E$.  Since  $|\bs v M|\leq|\bs v|\cdot ||M||_{HS}$
 for every vector $\bs v$, we have the following estimates on $E$:

\begin{eqnarray*}
\big| \bs f|_E-  \widehat{\bs u}\;\exp{\underline G_1}\cdots \exp{\underline G_m}\big| &=&
\big| \bs f|_E-  \widehat{\bs a}\;\exp{\underline G_1}\cdots \exp{\underline G_m} \big| \\
&=&
\big|(\bs h-\widehat{\bs a})\;\exp{\underline G_1}\cdots \exp{\underline G_m} \big| \\
&\leq& | \bs h-\widehat{\bs a}|\, ||M||_{HS}<\e.
\end{eqnarray*}
\medskip

Since $\widehat{\bs u}\;\in\; U_n(C(X,\C))$ \bl{(resp. 
$\widehat{\bs u}\;\in\; \mathcal P(U_n(C(X,\C)))$)}, we deduce from  Lemma \ref{reduaufE} \bl{(resp.  Lemma \ref{redu-exp-aufE})}, applied to the algebra 
$C(X,\C)$, that
$(\bs f, \hat g)$ is reducible in $C(X,\C)$ \bl{(resp. reducible to the principal component
of $C(X,\C)$} (whenever $\e>0$ is small).  

(2) $\imp$ (1)~~ \bl{(resp. (5) $\imp$ (4)~~)}
Let $(\bs f,g)\in U_{n+1}(A)$. Hence  $\widehat{\bs f}$ and $\widehat g$
 have no common zeros on $X$.  By Theorem \ref{bprinc-multi}, \bl{(resp. Theorem
  \ref{reduprinci-vec})}
  hypothesis (2) \bl{(resp. (5))} and the assumption $M(A)=X$ imply
 that $(\widehat{\bs f}, \widehat  g)$ is reducible in $C(X,\C)$ \bl{(resp. reducible to the principal
 component  of $U_n(C(X,\C))$)}.
   By the Corach-Su\'arez Theorem \ref{redu-cx=a} 
   \bl{(resp.  Theorem \ref{expreduvector})}
    $(\bs f, g)$ is reducible in $A$ \bl{(resp. reducible to the principal component of $U_n(A)$)}.
     \end{proof}
%%%%

\section{Exponential reducibility II}\label{sII}

Here we introduce our second notion of exponential reducibility.

 \begin{definition}
Let $A$ be a commutative unital Banach algebra over $\K$ with identity element ${\bs 1}$.
Given $$(\bs a,g):=(a_1,\dots,a_n,g)\in U_{n+1}(A),$$
we call $(\bs a,  g)$  {\it exponentially reducible} \index{exponential reducibility} if
there exists $x_j, b_j\in A$ such that 
$$\sum_{j=1}^n  e^{x_j}(a_j+b_j g)={\bs 1}.$$
\end{definition}
\begin{observation}
~ Let $A$ be  a commutative unital Banach algebra  over $\K$  such that
\begin{enumerate}
\item [(1)]~ $\bsr A=1$,
\item[(2)]~ $U_1(A)$ is connected.
\end{enumerate}
Then every invertible pair $(a,g)\in A$ is exponentially reducible.
\end{observation}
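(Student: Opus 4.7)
The plan is to combine the two hypotheses in the most direct way possible: use $\bsr A=1$ to produce a single reduction of $(a,g)$ to an invertible element of $A$, and then use connectedness of $U_1(A)$ to express that invertible element as a single exponential.

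First, given $(a,g)\in U_2(A)$, I will invoke the assumption $\bsr A=1$: by definition of the Bass stable rank, every invertible pair is reducible, so there exists $b\in A$ with
\[
u := a + bg \in U_1(A) = A^{-1}.
\]
Next, I will use hypothesis (2). Recall from the discussion following Theorem \ref{compo} (see also the Remark after the definition of the principal component) that the connected component of $\bs 1$ in $A^{-1}$ is exactly $\exp A = \{e^v : v \in A\}$. Since $U_1(A)=A^{-1}$ is assumed connected, there is only one component, and therefore
\[
A^{-1} = \exp A.
\]
In particular $u = e^v$ for some $v\in A$.

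Finally, multiplying $u=a+bg$ by $e^{-v}$ yields
\[
e^{-v}\,(a+bg) = e^{-v}\,u = \bs 1,
\]
which, with $x_1:=-v$ and $b_1:=b$, is precisely the defining relation for exponential reducibility in the case $n=1$. Hence $(a,g)$ is exponentially reducible.

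There is essentially no obstacle: the statement is a two-line consequence of the definitions once one recalls that when $A^{-1}$ is connected it coincides with $\exp A$. The only point worth flagging is to justify the equality $A^{-1}=\exp A$ under hypothesis (2), which follows from Theorem \ref{compo} applied with $n=1$ (the connected component of the identity of $U_1(A)$ is $\bs 1\cdot \mathrm{Exp}\,\mathcal M_1(A)=\exp A$).
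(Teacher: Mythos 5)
Your proof is correct and is essentially identical to the paper's: reduce $(a,g)$ to $u=a+bg\in A^{-1}$ via $\bsr A=1$, use connectedness of $U_1(A)$ to write $u=e^v$, and rearrange to $e^{-v}(a+bg)=\bs 1$. The only addition is your explicit justification that $A^{-1}=\exp A$ via Theorem \ref{compo} with $n=1$, which the paper takes for granted.
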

\begin{proof}
By (1), $a+ bg\;\in\; U_1(A)$ for some $b\in A$.  Since
$U_1(A)=\exp A$, we arrive at $a+bg=e^x$ for some $x\in A$. This is of course equivalent to say
that $e^{-x}(a+bg)={\bs 1}$.  
     \end{proof}

The following result gives a relation between exponential reducibility and reducibility
to the principal component.

\begin{proposition}
Let  $A$ be  a commutative unital Banach algebra  over $\K$ and let $(\bs a,g)\in U_{n+1}(A)$.
Suppose that $(\bs a,g)$ is exponentially reducible. Then $(\bs a,g)$ is reducible to the principal component of $U_n(A)$. 
\end{proposition}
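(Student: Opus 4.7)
The plan is to apply the technical Lemma~\ref{tech-princi} directly. By hypothesis, there exist $x_1,\dots,x_n,b_1,\dots,b_n \in A$ with
$$
\sum_{j=1}^n e^{x_j}(a_j+b_j g)=\bs 1.
$$
Assume first that $n\ge 2$. I would set $\bs x := (b_1,\dots,b_n)$ and $\bs v := (e^{x_1},\dots,e^{x_n})$; the exponential reducibility hypothesis then reads exactly as the B\'ezout identity
$$
(\bs a + g\,\bs x)\cdot \bs v^{\,t}=\bs 1,
$$
which is the central assumption of Lemma~\ref{tech-princi} applied to the tuple $(\bs a,g)$ in place of $(\bs f,g)$.

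Next, I would check the two remaining hypotheses of that lemma. That $\bs v \in U_n(A)$ is immediate, since pairing $\bs v$ against $(e^{-x_1},0,\dots,0)$ yields $\bs 1$. To check that $\bs v$ is reducible with respect to one of its coordinates in the sense of the footnote to Lemma~\ref{tech-princi}, I choose $i_0 = 1$ and take all $n-1$ reduction coefficients equal to zero; the resulting $(n-1)$-tuple is $(e^{x_2},\dots,e^{x_n})$, which lies in $U_{n-1}(A)$ because $e^{-x_2}\cdot e^{x_2}=\bs 1$. With these choices all hypotheses of Lemma~\ref{tech-princi} are met, and it delivers the conclusion that $(\bs a,g)$ is reducible to the principal component of $U_n(A)$.

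Since Lemma~\ref{tech-princi} is stated only for $n\ge 2$, the case $n=1$ requires a separate but entirely trivial treatment: here the hypothesis collapses to $e^{x_1}(a_1+b_1 g)=\bs 1$, whence $a_1+b_1 g = e^{-x_1} \in \exp A = \mathcal P(U_1(A))$, which is precisely Definition~\ref{exporedu1}. I do not anticipate any real obstacle; the proof consists essentially in recognising Lemma~\ref{tech-princi} as the natural tool and exhibiting the correct auxiliary vector $\bs v$. The only mildly subtle point is that the exponentials $e^{x_j}$ play a double role -- they serve simultaneously as the coordinates of the auxiliary vector $\bs v$ and, through $e^{-x_2}$, as the witness that the $(n-1)$-tuple obtained from $\bs v$ by dropping its first coordinate is invertible.
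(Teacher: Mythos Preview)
Your proof is correct and follows essentially the same route as the paper: set $\bs v=(e^{x_1},\dots,e^{x_n})$, rewrite the hypothesis as $(\bs a+g\,\bs b)\cdot\bs v^t=\bs 1$, observe that $\bs v$ is reducible with respect to one of its coordinates (since each entry is a unit), and invoke Lemma~\ref{tech-princi}, with the case $n=1$ handled separately. Your write-up is in fact a bit more explicit than the paper's in verifying the reducibility of $\bs v$.
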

\begin{proof}
By assumption,  $\sum_{j=1}^n  e^{x_j}(a_j+b_j g)={\bs 1} $ for some 
$\bs v:=(e^{x_1},\dots, e^{x_n})\; \in\; U_n(A)$ and $\bs b:=(b_1,\dots,b_n)\in A^n$;
that is
$$(\bs a+ \bs b\, g) \cdot \bs v^t={\bs 1}.$$ 
Since $\bs v\;\in\; U_n(A)$ is reducible in $A$ (if $n\geq 2$),
we deduce from  Lemma \ref{tech-princi} that 
$$\bs a + \bs b\; g \;\in\; \bs e_1\;\cdot\; Exp\;\mathcal M_n(A)=\mathcal P(U_n(A)).$$
In other words, $({\bs a},g)$ is reducible to the principal component of $U_n(A)$.
The case $n=1$ is obvious.
     \end{proof}

{\bf Remark}
Whereas   for invertible pairs both notions coincide, 
 exponential reducibility of tuples of length at least three is, in general,  a much stronger 
 requirement than reducibility to the principal component. 
 As an example, we take the disk algebra $A(\D)$. 
 Let $(z,f)\in U_2(\D)$ be  an invertible pair that is not totally reducible 
 (this means that there do not exist two invertible functions $u$ and $v$ in $A(\D)$ such that
 $uz+vf=1$, see \cite{moru92} for the existence). Then 
 $(z, f, 0)\in U_3(A(\D))$. Since $\bsr A(\D)=1$, $U_2(A(\D))$ is connected 
(\cite{cs}),  and so $U_2(A(\D))$ coincides with its principal component.
 In particular  every invertible triple in $A(\D)$ is reducible to the principal component of $U_2(A(\D))$.  On the other hand, $(z,f,0)$ cannot be exponentially reducible, since
 otherwise
 $$e^{a_1(z)} (z+ b_1(z) \cdot 0) + e^{a_2(z)}(f(z) + b_2(z) \cdot 0)=1$$
 for some functions $a_j,b_j\in A(\D)$.
 But an equation of the form $e^{a_1(z)} z+ e^{a_2(z)} f(z)=1$
 is not possible because by our choice, $(z,f)$ is not totally reducible.

Examples of exponentially reducible tuples appeared in \cite{lar} (for pairs)  and  \cite{mo92}
(for tuples):
\begin{example}
Let $f_j\in\H$ and let $b$ be an \IBP. Suppose that $(f_1,\dots,f_n,b)\in U_{n+1}(\H)$.
Then $(f_1,\dots,f_n,b)$ is exponentially reducible.
\end{example}

We do not know yet a characterization  of the exponentially reducible tuples
(in none of the standard algebras).

\section{Complementing left-invertible matrices}

Based on the Arens-Novodvorski-Taylor theorem,  we conclude our paper by giving
  a simple  analytic  proof of  a result by V. Ya. Lin \cite[p. 127]{lin} concerning extension
   of left invertible matrices. Although this proof seems to be known among the specialists
   in the field, it never appeared explicitely in print (see also  the footnote in \cite[p. 127]{lin}).
One may view this result as a companion result to Theorem \ref{expreduvector} and Theorem  \ref{extendingrows}.

\begin{theorem} [Lin]
Let $A$ be a commutative unital Banach algebra over $\C$. Then a left-invertible matrix $L$
over $A$ can be complemented/extended to an invertible matrix over $A$ if and only if $\hat L$
\footnote{ Here $\hat L=(\widehat {a_{i,j}})$ is the matrix formed with the Gelfand-transforms
of the entries of $L$.}
can be complemented in the algebra $C(M(A))$.
\end{theorem}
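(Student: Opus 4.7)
The ``only if'' direction is immediate: applying the Gelfand transform to any extension $M = (L | L') \in \mathcal M_n(A)^{-1}$ of $L$ yields $\hat M = (\hat L | \hat{L'}) \in \mathcal M_n(C(M(A)))^{-1}$ extending $\hat L$. My plan concerns the converse.

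Suppose $\hat L$ extends to $\tilde M = (\hat L | \underline{L'}) \in \mathcal M_n(C(M(A)))^{-1}$. First I would invoke the matricial form of the Arens--Novodvorski--Taylor theorem (parts (2) and (4) of Theorem \ref{arnt}) to find $N \in \mathcal M_n(A)^{-1}$ such that $\tilde M$ and $\hat N$ lie in the same connected component of $\mathcal M_n(C(M(A)))^{-1}$. This furnishes a factorization $\tilde M = \hat N \cdot P$ with $P = \exp(\underline G_1)\cdots \exp(\underline G_m)$ in the principal component. Setting $L_0 := N^{-1}L \in \mathcal M_{n,k}(A)$ and taking the first $k$ columns of the identity $\hat N^{-1} \tilde M = P$, I would obtain $\hat{L_0} = P \cdot \Pi_k$, where $\Pi_k := \bigl(\begin{smallmatrix} I_k \\ 0\end{smallmatrix}\bigr)$. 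Since $N$ is invertible over $A$, $L$ extends over $A$ if and only if $L_0$ does, reducing the problem to proving extendability of $L_0$.

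Next I would exploit the path $P(t) := \exp(t\underline G_1)\cdots \exp(t\underline G_m)$, $t \in [0,1]$, which connects $I_n$ to $P$ inside $\mathcal M_n(C(M(A)))^{-1}$: then $P(t) \cdot \Pi_k$ is a continuous path of left-invertible $n\times k$ matrices over $C(M(A))$ from $\Pi_k$ to $\hat{L_0}$. I would then apply a variant of ANT adapted to left-invertible $n\times k$ matrices, namely that the Gelfand map induces a bijection on connected components of the space of left-invertible $n \times k$ matrices. This should follow by the same approximation and homotopy-lifting techniques (cf.\ Theorem \ref{appro}) that underlie the classical ANT, now applied to left-invertible matrices in place of $GL_n$. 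It would give that $L_0$ and $\Pi_k$ lie in the same connected component of left-invertible $n \times k$ matrices over $A$.

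To conclude, I would observe that the set of extendable left-invertible $n\times k$ matrices over $A$ is both open (invertibility of $(L|L')$ is an open condition on $L$ for fixed $L'$) and closed (if $L_j \to L$ left-invertibly with each $L_j$ extending via some $L_j'$, then for $j$ large $(L | L_j')$ remains invertible by continuity of the determinant). Hence extendability is constant on connected components. Since $\Pi_k$ extends trivially via $\bigl(\begin{smallmatrix} 0 \\ I_{n-k}\end{smallmatrix}\bigr)$, the matrix $L_0$ extends, and consequently $L = N L_0$ extends over $A$. The main obstacle is the left-invertible matrix version of ANT invoked above, which is not explicitly stated in the paper; a direct induction on $k$, reducing to the $k=1$ case via Theorem \ref{extendingrows} combined with the Corach--Su\'arez Theorem \ref{redu-cx=a}, offers an alternative route at the cost of more bookkeeping.
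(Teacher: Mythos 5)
Your overall architecture is sound, but it rests on an unproved ingredient that the paper deliberately avoids. The paper's proof first reduces to the case of a single invertible row (citing Vidyasagar's book for the reduction from $n\times k$ left-invertible matrices to rows), and then applies the Arens--Novodvorski--Taylor theorem in exactly the form it is stated in Theorem~\ref{arnt}: part~(4) to produce $Q\in\mathcal M_n(A)^{-1}$ with $\widehat Q$ homotopic to $M$, then part~(3), applied to the two \emph{rows} $\bs a$ and $\bs b=\bs e_1 Q$, to conclude they lie in the same component of $U_n(A)$. No ANT statement for left-invertible $n\times k$ matrices with $1<k<n$ is ever needed. You instead invoke ``a variant of ANT adapted to left-invertible $n\times k$ matrices'' — a bijection between connected components of left-invertible $n\times k$ matrices over $A$ and over $C(M(A))$. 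That result is true, but it is nowhere in the paper, and it is not a small step: proving it would require rebuilding both the approximation lemma (Theorem~\ref{appro}) and the component-lifting argument in the rectangular setting. You acknowledge this is ``the main obstacle,'' and you are right to flag it; as written it is a genuine gap.

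There is also a technical flaw in your closedness argument. You claim that if $L_j\to L$ and each $L_j$ extends via some $L_j'$, then $(L\,|\,L_j')$ is invertible for $j$ large ``by continuity of the determinant.'' But the margin of invertibility of $(L_j\,|\,L_j')$ and the size of $L_j'$ are uncontrolled in $j$, so nothing forces $\lVert L - L_j\rVert$ to fall below the threshold needed to perturb $(L_j\,|\,L_j')$ to $(L\,|\,L_j')$. The correct argument is the one underlying Theorem~\ref{appro}: show that a left-invertible $L''$ near $L$ satisfies $L''=EL$ for some $E\in\mathcal M_n(A)^{-1}$ close to $I_n$; then from $L_j=E_jL$ and $(L_j\,|\,L_j')$ invertible one gets $(L\,|\,E_j^{-1}L_j')=E_j^{-1}(L_j\,|\,L_j')$ invertible. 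This is another place where the rectangular analogue of Theorem~\ref{appro} is silently required. Your suggested ``alternative route'' — induction on $k$, reducing to a single row via Theorem~\ref{extendingrows} and Corach--Su\'arez — is in fact essentially what the paper does (the Vidyasagar citation plays the role of that reduction), and is the cleaner path: it keeps you inside the ANT machinery the paper has already assembled for rows and square matrices.
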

\begin{proof} 
It is sufficient to consider invertible rows (see \cite[p. 345/346]{vid}). So 
let $\bs a\in U_n(A)$. Suppose that there exists an invertible matrix 
$M\in \mathcal M_n(C(M(A)))$ such that
$\bs{\widehat a}= \bs e_1 M$; that is, $\widehat{\bs a}$ is the first row of $M$.

 By the Arens-Novodvorski-Taylor Theorem \ref{arnt} (4),
there is $Q\in \mathcal M_n(A)^{-1}$ such that $\widehat Q$ is homotopic in $M_n(C(M(A)))^{-1}$ to 
$M$. Hence, for some $G_j\in M_n(C(M(A)))$, 
$$M= \widehat Q \;e^{G_1}\dots e^{G_k},$$ 
and so
$$\bs{\widehat a}= (\bs e_1  \widehat Q)\; e^{G_1}\dots e^{G_k}.$$
Let $\bs b$ be the first row of $Q$; that is $\bs b=\bs e_1 Q$.  Then $\widehat{\bs  b}=\bs e_1 \widehat Q$.

Since $\bs b\;\in\; U_n(A)$, we see that $\bs{\widehat a}$
and $\bs{\widehat b} $ belong to the same component of $U_n(C(M(A)))$. Hence, 
by another application of the Arens-Novodvorski Theorem  \ref{arnt} (3),  $\bs a$ and $\bs b$
belong to the same component of $U_n(A)$.   Thus, there exist $H_j\in \mathcal M_n(A)$ such that
$$
\bs a= \bs b\; e^{ H_1}\cdots e^{ H_k}.
$$
Consequently,
\begin{eqnarray*}
\bs a&= &(\bs e_1  Q)\;  e^{ H_1}\cdots e^{H_k}\\
&=& \bs e_1 \; R
\end{eqnarray*}
for some $R\in \mathcal M_n(A)^{-1}$.
     \end{proof}

{\bf Acknowledgements}
 We thank Alexander Brudnyi for valuable comments on the Arens-Novodvorski-Taylor Theorem \ref{arnt} and the referee for his suggestions.

\end{document}